\numberwithin{equation}{section}  
\renewcommand{\a}{\alpha}  
\renewcommand{\b}{\beta}  
\newcommand{\g}{\gamma}
\renewcommand{\l}{\lambda}
\newcommand{\e}{\varepsilon}
\newcommand{\R}{{\mathbb R}}
\newcommand{\1}{{1\hspace{-1.4 mm}1}}
\newcommand{\car}{{1\hspace{-1.4 mm}1}}
\newcommand{\Sc}{{\mathbb S}}
\newcommand{\Hc}{{\mathcal H}}  
\newcommand{\Lc}{{\mathcal L}}
\def\oc{{\overline c}}
\def\uc{{\underline c}}
\def\etab{{\overline \eta}}
\newtheorem{theorem}{Theorem}[section]  
\newtheorem{lemma}[theorem]{Lemma}
\newtheorem{definition}[theorem]{Definition}  
\newtheorem{proposition}[theorem]{Proposition}  
\theoremstyle{definition}
\theoremstyle{remark}  
\newtheorem{remark}[theorem]{Remark}
\title{Uniqueness Results for Nonlocal Hamilton-Jacobi Equations}
\begin{document}

\author{Guy Barles   
\and Pierre Cardaliaguet 
 \and {Olivier Ley} \and Aur\'elien Monteillet}

\address{ (G. Barles, O. Ley)  
Laboratoire de Math\'ematiques et Physique Th\'eo\-ri\-que \\
F\'ed\'eration Denis Poisson \\
Universit\'e de Tours \\
Parc de Grandmont, 37200 Tours, France \\ {\tt \{barles,ley\}@lmpt.univ-tours.fr}}
\address{
(P. Cardaliaguet, A. Monteillet) Laboratoire de Math\'ematiques \\ CNRS UMR 6205\\  Universit\'e de Brest \\ 6 Av. Le Gorgeu
BP 809, 29285 Brest, France \\
 {\tt \{pierre.car\-da\-liaguet, aurelien.monteillet\}@univ-brest.fr}} 

\begin{abstract} 
We are interested in nonlocal Eikonal Equations describing the evolution  of interfaces moving with a nonlocal, non monotone velocity. 
For these equations, only the existence of global-in-time weak solutions is available in some particular cases. 
In this paper, we propose a new approach for proving uniqueness of the solution 
when the front is expanding. This approach simplifies and extends existing results for dislocation dynamics. It 
also provides the first uniqueness result for a Fitzhugh-Nagumo system. The
 key ingredients are some new perimeter estimates for the evolving fronts 
 as well as some uniform interior cone property for these fronts.
\end{abstract} 

\thanks{This work was partially supported by the ANR (Agence Nationale de la Recherche) through MICA project (ANR-06-BLAN-0082)}
 
\keywords{ 
Nonlocal Hamilton-Jacobi Equations, dislocation dynamics, Fitzhugh-Nagumo system, nonlocal front  
propagation, level-set approach,  
geometrical properties, lower-bound gradient estimate, viscosity solutions, 
eikonal equation,  $L^1-$dependence in time.} 
 
\subjclass{49L25, 35F25, 35A05, 35D05, 35B50, 45G10} 

\maketitle

\section{Introduction}

In this article, we are interested in uniqueness results for different
types of problems which can be written as nonlocal Hamilton-Jacobi 
Equations of the following form:
\begin{equation}\label{dyseqn}
u_t=c[\car_{\{u\geq 0\}}](x,t)|D u|\quad \hbox {in  }\R^N \times (0,T)\; ,
\end{equation}
\begin{equation}\label{iddyseqn}
u(x,0)=u_0 (x) \quad \hbox {in  }\R^N \; ,
\end{equation}
where $T > 0$, the solution $u$ is a real-valued function, $u_t$ and$Du$ 
stand respectively for its time and space derivatives and $\car_A$ is the 
indicator function of a set $A$. Finally $u_0$ is a bounded, 
Lipschitz continuous function.

For any indicator function $\chi$ or more generally for any 
$\chi \in L^\infty$ with $0 \leq \chi \leq 1$ a.e., the function 
$c[\chi]$ depends on $\chi$ in a nonlocal way and, in the main 
examples we have in mind, it is obtained from $\chi$ through a 
convolution type procedure (either only in space or in space and 
time). In particular, in our framework, despite the fact that $\chi$ has no 
regularity neither in $x$ nor in $t$, $c[\chi]$ will be always 
Lipschitz continuous in $x$; on the contrary we impose no 
regularity with respect to $t$. More precisely we always 
assume in the sequel that, for any
$\chi$, the velocity $c=c[\chi]$ satisfies

\vspace*{2mm}\noindent{\bf (H1)} 
For all $x \in \R^N$, $t \mapsto c(x,t)$ is measurable and there 
exist ${C}, \uc, \oc >0$ such that, for all $x,y\in \R^N$
and $t\in [0,T],$
\begin{eqnarray}
& & |c(x,t)-c(y,t)|\leq {C}|x-y|, \nonumber \\
& & 0 < \uc \leq c(x,t)\leq \oc. \label{cborne}
\end{eqnarray}
We will come back to this assumption later on.

To give a first flavor of our main uniqueness results, we can 
point out the following key facts: Equation (\ref{dyseqn}) 
can be seen as the ``level-set approach''-equation 
associated to the motion of the front $\Gamma_t:=\{x:u(x,t)=0\}$ 
with the nonlocal velocity $c[\car_{\{u(\cdot ,t)\geq 0\}}].$ However, 
in the non-standard examples we consider, it is not only 
a nonlocal but also non-monotone ``geometrical'' equation; 
by non-monotone we mean that the inclusion principle, which 
plays a central role in the ``level-set approach'', does not 
hold and, therefore, the uniqueness of solutions cannot be proved 
via standard viscosity solutions methods.

In fact, the few uniqueness results which exist in the literature 
(see below) rely on $L^1$ type estimates on the solution; this 
is natural since one has to connect the continuous function $u$ 
and the indicator function $\car_{\{u\geq 0\}}$. 
The main estimates concern measures of sets of the type 
$\{x: a\leq u(x,t)\leq b\}$ for $a,b$ close to $0$. 
Whether or not the aforementioned estimate has to be uniform on time, or of integral 
type, strongly depends on the properties of the convolution kernel. 
In order to emphasize this fact, we are going to concentrate on 
two model cases: the first one is a dislocation type equation
(see Section \ref{sec:mod1})
in which the kernel belongs to $L^\infty$ while the second one is 
related to the Fitzhugh-Nagumo system arising in neural wave 
propagation or in chemical kinetics in which the kernel is 
essentially the kernel of the Heat Equation (see Section
\ref{sec:mod2}).
In that case, it  is not in $L^\infty$. 
The fact that the convolution kernel 
is, or is not, bounded is indeed {\it the} key difference here.

Before going further, let us give some
references: for the first model case (dislocation type 
equations), we refer the reader to 
Barles, Cardaliaguet, Ley and Monneau \cite{bclm07} where general 
results are provided for these equations. We point out---and we 
will come back to this fact later---that uniqueness in the 
non-monotone case was first obtained by 
Alvarez, Cardaliaguet and Monneau \cite{acm04} and then by 
Barles and Ley \cite{bl06} using different arguments; we also 
refer to Rodney, Le Bouar and Finel \cite{rlf03} for 
the physical background on these equations. 
The Fitzhugh-Nagumo system has been investigated in particular by  
Giga, Goto and Ishii \cite{ggi92}, and by Soravia and Souganidis \cite{ss96}. 
They provided a notion of weak solution for this system (see \eqref{system} below) and 
proved existence of such weak solutions. They also study the 
connections with the phase field model (a reaction-diffusion 
system which leads to such a front propagation model).
However the uniqueness question has been left open until now. \\

Let us return to the key steps to prove uniqueness 
for (\ref{dyseqn})-(\ref{iddyseqn}). A major issue is 
the properties of the solutions of the Eikonal equations of the form
\begin{equation}\label{chjb}
u_t = c(x,t)|D u| \quad\hbox{ in} \ \R^N\times (0,T),
\end{equation}
where $c$ is a continuous function, satisfying suitable assumptions. 
Of course, such partial differential equations 
appear naturally when considering 
$\car_{\{u\geq 0\}}$ as an a priori given function. We recall that this 
equation is related via the level-set approach to the motion 
of fronts with a $(x,t)$-dependent normal velocity $c(x,t)$ 
and to deal with compact fronts and to simplify matter, 
we assume that the initial datum satisfies the following conditions: the subset $\{u_0>0\}$
is non empty and there exists 
$R_0>0$ such that
\begin{equation} \label{cond-infinity}
u_0=-1 \quad \text{in} \; \R^N \setminus \bar{B}(0,R_0).
\end{equation}
This implies, in particular, that the initial front 
$\Gamma_0 = \{u_0 = 0\}$ is a non empty compact subset of $B(0,R_0)$.

Assumption {\bf (H1)} ensures existence and uniqueness 
of a solutions to \eqref{chjb} but we also assume that the 
function $c=c[\chi]$ is positive (and even strictly positive), 
together with

\vspace*{2mm}
\noindent{\bf (H2)} There exists $\eta_0 >0$ such that
\begin{eqnarray*}
-|u_0(x)|-|D u_0(x)|+\eta_0 \leq 0 \ {\rm in} \ \R^N \ 
{\rm in \ the \ viscosity \ sense.}
\end{eqnarray*}
The above assumption implies that the set $\{u=0\}$ has a zero Lebesgue 
measure (cf. Ley \cite{ley01}) which is an important 
property for our arguments. Indeed \cite{bclm07} provides 
a counter-example (even in a (quasi) monotone case) where 
fattening phenomena leads to a non-uniqueness feature for 
a nonlocal equation. In addition to this non-fattening 
property, a key consequence of {\bf (H1)}-{\bf (H2)} is 
a lower bound on the gradient $Du$ on a set 
$\{x: \, |u(x,t)|\leq \eta\}$ for a small enough $\eta$ 
(cf. \cite{ley01}).

We now concentrate on the estimates of the measures of the 
volume of sets like $\{a \leq u(\cdot,t) \leq b\}$ 
where $-\eta \leq a < b \leq \eta$. 
We first note that such estimates are related with 
perimeter estimates
of the  $\alpha$ level-sets of $u$  for $\alpha$ close to $0$ 
(typically $|\alpha|<\eta$): indeed, combining the co-area formula 
with the lower bound on the gradient of the solution, we obtain
\begin{eqnarray}
\int_{\R^N}\, \car_{\{a \leq u(\cdot,t) \leq b\}} dx & = & \int_a^b
\int_{\{u(\cdot,t)=s\}}|Du|^{-1} d{\mathcal H}^{n-1} ds \nonumber \\
& \leq &  \frac{b-a}{\etab}\sup_{a \leq s \leq b}\,{\rm Per}(\{u(\cdot,t)=s\})\; ,
\label{perim-est}
\end{eqnarray}
where $\etab$ is the lower bound on $|Du|$ on the set
$\{x: \, |u(x,t)|\leq \eta\}$.

In  \cite{acm04} and \cite{bl06}, perimeter estimates for the 
$\alpha$ level-sets of $u$ were obtained by using bounds on the 
curvatures of these sets. 
Although this approach was powerful, it has the drawback to require 
strong assumptions on the dependence in $x$ of 
$c[\chi]$ (typically a ${\mathcal C}^{1,1}$ regularity).
Unfortunately such strong regularity does not always hold: for 
instance it is not the case for the  Fitzhugh-Nagumo system.

The key contribution of this paper is to provide $L^1([0,T])$ 
or  $L^\infty([0,T])$ estimates  of the volume 
of the set $\{a \leq u(\cdot,t) \leq b\}$
 (or, almost equivalently, of the perimeter of the $\alpha$ 
level-sets of $u$) in situations where the velocity $c[\chi]$ 
is less regular in $x$. 
As a consequence we are able to prove new uniqueness results. 
 
For the dislocation dynamics model, our approach allows to 
relax the assumptions of \cite{acm04} and \cite{bl06} on the data. 
The proofs are also simpler, 
requiring only  $L^1([0,T])$ estimates and a mild regularity 
($c[\chi]$ is merely measurable in time and Lipschitz continuous in space). 
So the main conclusion here is that ``soft" estimates are sufficient 
provided the convolution kernel is in $L^\infty$.

On the contrary, for the Fitzhugh-Nagumo system, where the
convolution kernel is unbounded, these $L^1$-estimates are no more sufficient 
and the uniqueness proof rather requires heavy 
$L^\infty$-estimates on the perimeter. These estimates are obtained 
by establishing, through optimal control type arguments, that 
the set $\{x: u(x,t) >0\}$ satisfies a uniform ``interior cone property", 
from which we deduce (explicit) estimates on the perimeter. 

The paper is organized as follows: in Section~\ref{sec:existence}, 
we recall the notion of weak solution for \eqref{dyseqn} introduced 
in \cite{bclm07}.  In Section~\ref{sec:mod1} we prove uniqueness of 
the solution for the dislocation type equation, while we deal with 
the Fitzhugh-Nagumo case in Section~\ref{sec:mod2}. The main technical 
results of this paper are gathered in Section~\ref{auxil}: we recall 
here some useful results for the Eikonal Equation \eqref{chjb}, 
we show the interior cone property and deduce the uniform perimeter estimates.\\

\noindent\textbf{Aknowledgment.}
This work was supported by the contract ANR MICA ``Mouvements d'Interfaces, Calcul et Applications''.\\

\noindent\textbf{Notation.} In the sequel, $|\cdot|$ denotes the standard euclidean norm in 
$\R^N$, $B(x,R)$ (resp. $\bar{B}(x,R)$) is the open (resp. closed) 
ball of radius $R$ centered at $x \in \R^N$.  
We denote the essential supremum of $f\in L^\infty (\R^N)$ 
or $f\in L^\infty(\R^n\times (0,T))$ by $|f|_\infty.$ Finally, 
${\mathcal L}^n$ and ${\mathcal H}^n$ denote, respectively, the $n$-dimensional
Lebesgue and Hausdorff measures.

\section{Definition of weak solutions to \eqref{dyseqn}}  
\label{sec:existence}  

We will use the following definition of weak solutions introduced in \cite{bclm07}.

\begin{definition}  
Let $u: \R^N \times [0,T]:\to \R$ be a continuous function. We say that $u$ 
is a weak solution of \eqref{dyseqn}-\eqref{iddyseqn} if there exists $\chi \in 
L^{\infty}(\R^N \times [0,T];[0,1])$ such that
\begin{enumerate}  
\item $u$ is a $L^1$-viscosity solution of  
\begin{eqnarray} \label{eq-gene2}
\left\{
\begin{array}{ll}
 u_t(x,t)= c[\chi](x,t)|Du(x,t)| & {in}\
 \R^N\times (0,T), \\
 u(\cdot , 0)= u_0 & {in}\ \R^N.
\end{array}
\right.
\end{eqnarray}
\item For almost all $t \in [0,T]$, 
$$
\1_{\{ u(\cdot,t) > 0 \}} 
\leq \chi(\cdot,t) \leq \1_{\{ u(\cdot,t) \geq 0 \}}
\ \ \ { in} \ \R^N.
$$  
\end{enumerate}  
Moreover, we say that $u$ is a classical solution of \eqref{dyseqn}
if in addition, for almost all $t \in [0,T]$ and almost everywhere in $\R^N$,  
$$  
\1_{\{u(\cdot,t)>0\}}=\1_{\{u(\cdot,t)\geq0\}} \ .  
$$  
\end{definition}  

We refer  to \cite[Appendix]{bclm07} for basic definition and
properties of $L^1$-viscosity solutions and to
\cite{ishii85, nunziante90, nunziante92, bourgoing04a,bourgoing04b}
for a complete presentation of the theory. 


\section{Model problem 1: dislocation type equations}\label{sec:mod1}
  
In this section, we consider equations arising in dislocations
theory (cf. \cite{rlf03}) where, for all 
$\chi \in L^\infty (\R^N)$ or $L^1 (\R^N)$, $c[\chi]$ is defined by
\begin{equation}\label{cdisloc}
c[\chi](x,t)=(c_0 * \chi)(x,t)+c_1 (x,t)\ \hbox {in  }\R^N \times (0,T),
\end{equation}
where $c_0, c_1$ are given functions, satisfying suitable assumptions 
which are described later on and ``$*$'' 
stands for the usual convolution in $\R^N$ with respect to the space 
variable $x.$ Our main result below applies to slightly more general 
cases but the main interesting points appear on this model case.

We refer to \cite{bclm07} for a complete description of the 
characteristics and difficulties connected to \eqref{dyseqn} in this
case; as recalled in the introduction, not this  equation is not only 
nonlocal but it is also, in general, non-monotone, which 
means that the maximum principle (or, here, inclusion principle) 
does not hold and one cannot apply directly viscosity solutions' 
theory. Roughly speaking, a (more or less) direct use of viscosity 
solutions' theory requires that $c_0 \geq 0$ in $\R^N \times (0,T)$, 
an assumption which is not natural
in the dislocations' framework.

We use the following assumptions on $c_0$ and $c_1$.\\

\noindent{\bf (H3)} $c_0, c_1 \in {\mathcal C}^0(\R^N\times [0,T])$ 
and there exists a constant ${C}$ 
such that, for any $x,y\in \R^N$ and $t \in [0,T]$,
$$ |c_0(x,t) -c_0(y,t)| + |c_1(x,t) -c_1(y,t)| \leq {C}|x-y|\; .$$
Moreover, $c_0 \in {\mathcal C}^0([0,T];L^1 (\R^N))$ and  there exist $\uc,\oc >0$ 
such that, for any $x\in \R^N$ and $t \in [0,T]$,
\begin{eqnarray*}
& |c_0(x,t)| \leq \oc\; ,  &\\
 &0< \uc \leq -|c_0(\cdot ,t)|_{L^1} + c_1(x,t) \leq |c_0(\cdot ,t)|_{L^1} 
+ c_1(x,t) \leq \oc\; .&
\end{eqnarray*}

This assumption ensures that the velocity $c[\chi]$ in \eqref{cdisloc}
satisfies {\bf (H1)} with constants independent of $0\leq \chi\leq 1$
with compact support in some fixed ball (see Step 1 in the proof of
Theorem~\ref{unicite-nl}).
Assumption {\bf (H3)} can be slightly relaxed (and in particular
localized) using that the front remains
in a bounded region of $\R^N$.
Note that, in contrast to \cite{bclm07}, we do not assume that $c_0,c_1$
are ${\mathcal C}^{1,1}$ (or semiconvex).

We provide a direct proof of uniqueness for the solution of the 
dislocation equation \eqref{dyseqn}; we recall that the existence 
of weak solutions is obtained in \cite{bclm07, bclmt08} and that, 
in our case, the weak solutions are classical solutions since the 
set $\{u=0\}$ has a zero Lebesgue measure by the result of
\cite{ley01} since $c[\chi]\geq 0$ for all $0\leq \chi\leq 1.$

\begin{theorem} \label{unicite-nl}
Suppose that $c_0,c_1$ satisfy {\bf (H3)} and that $u_0$ is a 
Lipschitz continuous function satisfying {\bf (H2)} and
such that \eqref{cond-infinity} holds. Then
\eqref{dyseqn}-\eqref{iddyseqn} 
has a unique (Lipschitz) continuous viscosity solution in $\R^N\times [0,T].$
\end{theorem}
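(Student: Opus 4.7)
Let $u_1,u_2$ be two weak solutions of \eqref{dyseqn}-\eqref{iddyseqn} with associated $\chi_1,\chi_2 \in L^\infty(\R^N\times[0,T];[0,1])$, and set $\delta(t) := \|u_1(\cdot,t)-u_2(\cdot,t)\|_\infty$. The first observation is that assumption {\bf (H3)} makes $c_i := c[\chi_i]$ satisfy {\bf (H1)} with constants $\uc,\oc,{C}$ that are \emph{independent of $\chi_i$}; combined with \eqref{cond-infinity} and standard Eikonal theory, this forces each $u_i$ to be globally Lipschitz with a common constant $L$ and keeps both fronts $\{u_i(\cdot,t)=0\}$ in a fixed ball $\bar B(0,R_T)$ for all $t\in[0,T]$. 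Moreover, {\bf (H2)} is propagated from $u_0$ to $u_i(\cdot,t)$ so that, by the result of Ley \cite{ley01}, there exist uniform $\eta>0$ and $\etab>0$ with $|Du_i|\ge \etab$ (in the viscosity sense) on $\{|u_i|\le \eta\}$, and $\{u_i=0\}$ has zero Lebesgue measure so the $u_i$ are automatically classical.

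The core of the uniqueness argument is a closed loop of three estimates. First, a continuous dependence estimate for $L^1$-viscosity solutions of the Eikonal equation (which reduces to a direct comparison argument since $|Du_i|\le L$) gives
$$\delta(t)\ \le\ L\int_0^t \|c_1(\cdot,s)-c_2(\cdot,s)\|_\infty\,ds.$$
Second, the defining formula \eqref{cdisloc} yields $c_1-c_2 = c_0*(\chi_1-\chi_2)$ in the $x$ variable, so the $L^\infty$ bound on the kernel in {\bf (H3)} gives
$$\|c_1(\cdot,s)-c_2(\cdot,s)\|_\infty\ \le\ \oc\,\|\chi_1(\cdot,s)-\chi_2(\cdot,s)\|_{L^1(\R^N)}.$$
This is the step where the boundedness of the kernel is used crucially: $L^1$-control of $\chi_1-\chi_2$ is converted into $L^\infty$-control of $c_1-c_2$.

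Third, one bounds $\|\chi_1-\chi_2\|_{L^1}$ by a tubular neighborhood measure of each zero level set. Since the sandwich $\car_{\{u_i>0\}}\le \chi_i\le \car_{\{u_i\ge 0\}}$ forces $\chi_1(x,s)\neq\chi_2(x,s)$ only when $\min(|u_1|,|u_2|)(x,s)\le\delta(s)$,
$$\|\chi_1(\cdot,s)-\chi_2(\cdot,s)\|_{L^1}\ \le\ \sum_{i=1,2}{\mathcal L}^N\bigl(\{|u_i(\cdot,s)|\le \delta(s)\}\bigr).$$
As long as $\delta(s)\le\eta$, the co-area computation \eqref{perim-est} combined with the lower bound $|Du_i|\ge \etab$ gives
$${\mathcal L}^N\bigl(\{|u_i(\cdot,s)|\le \delta(s)\}\bigr)\ \le\ \frac{2\delta(s)}{\etab}\,\sup_{|\alpha|\le \delta(s)}{\rm Per}\bigl(\{u_i(\cdot,s)=\alpha\}\bigr).$$
The last and decisive ingredient, provided by the machinery of Section~\ref{auxil}, is that these perimeters are dominated by a function $P(s)\in L^1(0,T)$, uniformly in $|\alpha|\le\eta$.

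Chaining the three estimates produces, on any interval where $\delta\le\eta$,
$$\delta(t)\ \le\ \frac{2L\oc}{\etab}\int_0^t P(s)\,\delta(s)\,ds,$$
and Gronwall's lemma, together with $\delta(0)=0$, yields $\delta\equiv 0$ there. A continuity/bootstrap argument on the time interval (using that $\delta$ is continuous and starts at zero) extends this to the whole of $[0,T]$, proving $u_1\equiv u_2$. The main obstacle, and the content of the paper's technical Section~\ref{auxil}, is precisely the $L^1([0,T])$ perimeter bound for level sets of the Eikonal equation under only Lipschitz regularity of the velocity in $x$; the rest of the proof is a soft assembly that hinges, conceptually, on the single fact that for the dislocation kernel one can trade regularity of the velocity for the boundedness $|c_0|_\infty\le\oc$.
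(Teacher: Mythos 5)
Your reduction steps match the paper's Steps 1--3: the uniform \textbf{(H1)} constants for $c[\chi_i]$, the continuous-dependence estimate (Lemma \ref{traj}), the conversion $|(c_1-c_2)(\cdot,s)|_\infty\le \oc\,\|\chi_1(\cdot,s)-\chi_2(\cdot,s)\|_{L^1}$ using $|c_0|_\infty\le\oc$, and the localization of $\{\chi_1\ne\chi_2\}$ inside the slab where $\min(|u_1|,|u_2|)\le\delta$. The gap is your ``decisive ingredient'': you bound ${\mathcal L}^N(\{|u_i(\cdot,s)|\le\delta(s)\})$ by $\frac{2\delta(s)}{\etab}\sup_{|\alpha|\le\delta(s)}{\rm Per}(\{u_i(\cdot,s)=\alpha\})$ and then assert that Section \ref{auxil} supplies an $L^1(0,T)$ majorant $P(s)$ for these perimeters, uniformly in the level $\alpha$. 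No such bound is available under the hypotheses of Theorem \ref{unicite-nl}. The perimeter estimates of Section \ref{auxil} come from the interior cone property (Theorems \ref{propagation-cone} and \ref{interior-cone}), which require the extra hypotheses \textbf{(H6)} (initial front with ${\mathcal C}^2$ boundary, hence the interior ball property) and \textbf{(H7)}, and they concern the zero level set $\{u\ge0\}$; Theorem \ref{unicite-nl} assumes only \textbf{(H2)}, \textbf{(H3)} and \eqref{cond-infinity}. Pointwise-in-time, sup-over-levels perimeter bounds are exactly what \cite{acm04,bl06} obtained via curvature bounds under ${\mathcal C}^{1,1}$ velocity --- the regularity the paper is precisely trying to dispense with --- so as written your argument reverts to the old approach and does not prove the theorem as stated.

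The paper avoids perimeters entirely in this model case: Proposition \ref{est-int} estimates the space--time volume $\int_0^\tau{\mathcal L}^N(\{a\le u\le b\})\,dt$ directly from the equation, writing $\car_{\{a\le u\le b\}}=(b-a)\varphi'(u)$, bounding it by $\frac{b-a}{\uc\etab}\,\varphi'(u)\,c\,|Du|=\frac{b-a}{\uc\etab}\,(\varphi(u))_t$ on the region where the lower gradient bound \eqref{bornepresfront} holds, and integrating in time. This gives $\int_0^\tau\int_{\R^N}|\car_{\{u_1\ge0\}}-\car_{\{u_2\ge0\}}|\,dx\,dt\le\frac{2\delta_\tau}{\etab\uc}\,\psi_\tau$, where $\psi_\tau$ is a difference of superlevel-set volumes of $u_0$ which tends to $0$ as $\tau\to0$ because $\{u_0=0\}$ has zero Lebesgue measure under \textbf{(H2)}. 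The conclusion $\delta_\tau\le L\psi_\tau\delta_\tau$ then closes by absorption on a small time interval (no Gronwall is needed, nor would your Gronwall loop close without the unproved $L^1$ perimeter bound), and uniqueness on all of $[0,T]$ follows by the continuation argument of Step 5. If you replace your perimeter step by this integral-in-time volume estimate, the rest of your outline goes through.
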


\begin{proof}[Proof of Theorem~\ref{unicite-nl}]\ \\ 
\noindent{\it 1. Uniform bounds for the velocity.} 
By {\bf (H3)} and Lemma \ref{fini-speed}, the set $\{u(\cdot, t)\geq 0\}$ remains in a 
fixed ball $\bar{B}(0,R_0+\overline{c}T)$ of $\R^N.$
Then, for any subset $A$ of $B(0,R_0+\overline{c}T)$, 
$c[\car_A]$ satisfies {\bf (H1)} with constants which are uniform in
$A$.\\ 
\noindent{\it 2. $L^\infty$-estimate.} 
If $u_1, u_2$ are two solutions of \eqref{dyseqn}-\eqref{iddyseqn},
for $0< \tau\leq T,$ we set
\begin{eqnarray*}
\delta_\tau :=\sup_{\R^N\times [0,\tau]} \, |u_1 ( x,t) - u_2 (x,t)|.
\end{eqnarray*}
Since $u_0$ is Lipschitz continuous and $0\leq c[\car_{\{u_i \geq 0\}}]\leq \oc$
($i=1,2$),
for $\tau$ small enough, we have $\delta_\tau \leq \eta/2$ where 
$\eta$ is obtained by applying Theorem \ref{thm-borneinf}
to the $u_i$'s. By Lemma~\ref{traj}, we have
\begin{eqnarray}
\delta_\tau &\leq &
|D u_0|_\infty {\rm e}^{{C} \tau}  \int_0^\tau 
|(c[\car_{\{u_1 \geq 0\}}]-c[\car_{\{u_2 \geq 0\}}])(\cdot ,t)|_\infty dt
\nonumber \\
&\leq &
|D u_0|_\infty {\rm e}^{{C} \tau}  \int_0^\tau |c_0(\cdot ,t) * 
(\car_{\{u_1(\cdot ,t)\geq 0\}}-\car_{\{u_2(\cdot ,t)\geq 0\}})|_\infty dt
\nonumber \\
&\leq &
\oc \, |D u_0|_\infty {\rm e}^{{C} T} 
\int_0^\tau \int_{\R^N} |\car_{\{u_1\geq 0\}}-\car_{\{u_2\geq 0\}}|dx dt
\label{calc-dto}
\end{eqnarray}
by using the $L^\infty$-bound $|c_0|_\infty \leq \oc.$\\
\noindent{\it 3. $L^1$-estimate.} We have
$$
|\car_{\{u_1\geq 0\}}-\car_{\{u_2\geq 0\}}|
\leq \car_{\{-\delta_\tau\leq u_1 \leq 0 \}} + \car_{\{-\delta_\tau \leq u_2 \leq 0 \}}
\quad {\rm in} \ \R^N\times [0,\tau].
$$
Using Proposition~\ref{est-int} we get
$$
\int_0^\tau \int_{\R^N} |\car_{\{u_1\geq 0\}}-\car_{\{u_2\geq 0\}}|dx dt
 \leq \frac{2 \delta_\tau }{\etab \uc}\psi_\tau\; ,
$$
where we have set
$$
\psi_\tau = 
{\mathcal L}^N(\{x\, :\,  u_0 (x)\geq -\delta_\tau -\oc|Du_0|_\infty  \tau\})
- {\mathcal L}^N(\{x\, :\,u_0(x)\geq 0 \})\; .
$$ 

\noindent{\it 4. Uniqueness on $[0,\tau]$ for small $\tau.$} 
Using this information in \eqref{calc-dto} yields
\begin{eqnarray*}
\delta_\tau \leq \frac{2 \oc }{\etab \uc}|D u_0|_\infty {\rm e}^{{C} T} \psi_\tau\delta_\tau,
\end{eqnarray*}
namely
\begin{eqnarray*}
\delta_\tau \leq L\psi_\tau\delta_\tau,
\end{eqnarray*}
where ${L} = {L}(T,\uc,\oc, {C}, \etab,|D u_0|_\infty)$ is a
constant. 
Since the $0$-level set of $u_0$ has a zero Lebesgue-measure from assumption {\bf (H2)}, 
we have $\psi_\tau \to 0$ as $\tau~\to~0$. Therefore, for $\tau$ small enough, ${L}\psi_\tau  < 1$ and
necessarily $\delta_\tau=0.$ It follows $u_1=u_2$ on $\R^N \times [0,\tau].$

\noindent{\it 5. Uniqueness on the whole time interval.} 
Step~4 gives the uniqueness for small times but then we can consider
$$
\bar \tau = \sup\{\tau>0;\  u_1=u_2\ \hbox{on}\ \R^N \times[0,\tau]\}.
$$ 
In fact, by continuity of $u_1$ and $u_2$, $\bar \tau$ is a maximum.
If $\bar \tau<T$, then we can repeat 
the above proof from time $\bar \tau$ instead of time $0$. This is, 
in fact, rather straightforward since $u(\cdot,\bar \tau)$ satisfies 
the same properties as $u_0.$  Finally, $\bar \tau=T$ and
the proof is complete.
\end{proof}

\section{Model problem 2: a Fitzhugh-Nagumo type system}\label{sec:mod2}

We are now interested in the following system:  
\begin{eqnarray} \label{system}
  \begin{cases} 
u_t=\alpha (v)|Du| & {\rm in} \  \R^N \times (0,T),  \\  
v_t-\Delta v = g^+(v)\1_{\{ u\geq 0 \}}+g^-(v)(1-\1_{\{
  u\geq 0 \}}) & {\rm in} \  \R^N \times (0,T),\\
u(\cdot ,0)=u_0, \ v(\cdot ,0)=v_0  & {\rm in} \  \R^N,
\end{cases}    
\end{eqnarray}  
which is obtained as the asymptotics as $\e \to 0$ of the following Fitzhugh-Nagumo 
system arising in neural wave propagation or chemical kinetics (cf. \cite{ss96}):
\begin{equation}\label{system-eps}
\left\lbrace
\begin{aligned}
u^{\e}_t-\e \Delta u^{\e}&=\frac{1}{\e} f(u^{\e},v^{\e}) & {\rm in} \  \R^N \times (0,T), \\
v^{\e}_t-\Delta v^{\e}&=g(u^{\e},v^{\e}) & {\rm in} \  \R^N \times (0,T),
\end{aligned}
\right.
\end{equation}
where
$$
\left\lbrace
\begin{aligned}
f(u,v)&= u(1-u)(u-a) -v & (0<a<1), \\
g(u,v)&=u-\gamma v & (\gamma >0).
\end{aligned}
\right.
$$
The functions $\alpha$, $g^+$ and $g^-$ appearing in \eqref{system} are Lipschitz continuous 
functions on $\R$ associated with $f$ and $g$. The functions $g^-$ and $g^+$ are bounded 
and satisfy $g^- \leq g^+$ in $\R.$ The initial datum $v_0$ is bounded and of
class ${\mathcal C}^1$ in $\R^N$ with $|Dv_0|_{\infty} <+\infty$. \\

System \eqref{system} corresponds to a front
$\Gamma(t)=\{u(\cdot,t)=0\}$ 
moving with normal velocity $\a (v)$, 
the function $v$ being itself the solution of an interface 
reaction-diffusion equation depending on the regions separated 
by $\Gamma(t)$. The $u$-equation in \eqref{system} can be written 
as  \eqref{dyseqn}-\eqref{iddyseqn} although the dependence of $c$ 
in $\car_{\{u(\cdot ,t)\geq 0\}}$ is less explicit than in the first 
model case. More precisely, for 
$\chi \in L^{\infty}(\R^N \times [0,T],[0,1])$, let $v$ be the solution of
\begin{equation}\label{heat-eq}
\left\{
\begin{array}{ll}
v_t-\Delta v = g^+(v)\chi+g^-(v)(1-\chi) & {\rm in} \,  \R^N \times [0,T], \\
v(\cdot ,0)=v_0  & {\rm in} \, \R^N\;.
\end{array} \right.
\end{equation}  
Then Problem \eqref{system} reduces to \eqref{dyseqn}-\eqref{iddyseqn} with $c[\chi](x,t) = \alpha (v(x,t))$.

Under the additional assumption that $\alpha> 0$ in $\R,$ 
we prove uniqueness of solutions to the system \eqref{system} 
(or equivalently \eqref{dyseqn}).
We suppose \\

\noindent{\bf (H4)} $v_0$ is bounded and ${\mathcal C}^1$, $g^-, g^+$ are Lipschitz continuous with
\begin{eqnarray*}
|Dv_0|_\infty<+\infty \ \ \ {\rm and} \ \ \  
\underline{g}\leq g^-(r) \leq g^+(r) \leq \overline{g}
\quad {\rm for \ all} \ r\in\R.
\end{eqnarray*}

\noindent{\bf (H5)}
$\alpha$ is Lipschitz continuous and there exists 
$\underline{c},\overline{c}, C>0$ such that,
 for all $r,r'\in\R,$
\begin{eqnarray*}
&\underline{c}\leq \alpha(r) \leq \overline{c},&\\
&|\alpha(r)-\alpha(r')|\leq C|r-r'|.&
\end{eqnarray*}

\noindent{\bf (H6)} $u_0$ is Lipschitz continuous 
and satisfies \eqref{cond-infinity} with $K_0:=\{u_0\geq 0\}$ 
which is the closure of a non empty bounded open subset of $\R^N$ with ${\mathcal C}^2$ boundary.\\

\begin{theorem}\label{uniqueness-fn}
Under assumptions {\bf (H2)}, {\bf (H4)}, {\bf (H5)}, {\bf (H6)}, system \eqref{system} has a 
unique solution.  
\end{theorem}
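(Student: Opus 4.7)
The plan is to mimic the four-step architecture of the proof of Theorem~\ref{unicite-nl}, replacing the crude $L^1$-convolution estimate (which worked there because $c_0\in L^\infty$) by a much more delicate heat-kernel estimate adapted to the unboundedness of the Gaussian. I pick two weak solutions $(u_1,v_1)$ and $(u_2,v_2)$, set $\chi_i:=\car_{\{u_i\geq 0\}}$, and use {\bf (H5)}, {\bf (H6)} together with Lemma~\ref{fini-speed} to confine each set $\{u_i(\cdot,t)\geq 0\}$ to the fixed ball $\bar B(0,R_0+\overline{c}T)$. I introduce $\delta_\tau:=\sup_{\R^N\times[0,\tau]}|u_1-u_2|$; for $\tau$ small enough Theorem~\ref{thm-borneinf} provides a common lower gradient bound $\etab$ on $\{|u_i|\leq \eta\}$. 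Lemma~\ref{traj} combined with the Lipschitz continuity of $\alpha$ then gives
\[
\delta_\tau \;\leq\; |Du_0|_\infty e^{{C}\tau}\int_0^\tau \bigl|\alpha(v_1(\cdot,s))-\alpha(v_2(\cdot,s))\bigr|_\infty\,ds
\;\leq\; C\int_0^\tau |v_1-v_2|_\infty(s)\,ds,
\]
so uniqueness reduces to controlling $w:=v_1-v_2$ by a small multiple of $\delta_\tau$.

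The function $w$ satisfies the semilinear heat equation
\[
w_t-\Delta w \;=\; \bigl[g^+(v_2)-g^-(v_2)\bigr](\chi_1-\chi_2)+h(x,t),
\]
where $h$ collects the Lipschitz-in-$v$ differences of $g^\pm$ and satisfies $|h|\leq C|w|$ thanks to {\bf (H4)}. Duhamel's formula combined with Gronwall's lemma absorbs the $h$ term and leaves
\[
|w(\cdot,t)|_\infty \;\leq\; Ce^{CT}\int_0^t \bigl\|G_{t-s}\ast(\chi_1-\chi_2)(\cdot,s)\bigr\|_\infty\,ds,
\]
where $G_\sigma$ is the Gaussian heat kernel on $\R^N$. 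Since $\chi_1-\chi_2$ is supported in the tube $E(s):=\{-\delta_\tau\leq u_1(\cdot,s)\leq 0\}\cup\{-\delta_\tau\leq u_2(\cdot,s)\leq 0\}$, the whole proof is reduced to a uniform-in-$s$ bound on $G_{t-s}\ast\car_{E(s)}$.

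This is where the main novelty of the paper intervenes. The mere volume bound $|E(s)|\leq C\delta_\tau$ coming from \eqref{perim-est} would only yield $\|G_{t-s}\ast\car_{E(s)}\|_\infty\lesssim\delta_\tau(t-s)^{-N/2}$, which is not integrable in $s$ as soon as $N\geq 2$. Instead, I use the uniform interior cone property provided by Section~\ref{auxil}, which forces $\partial\{u_i(\cdot,s)\geq 0\}$ to be a uniformly Lipschitz hypersurface and $E(s)$ to be a $\delta_\tau/\etab$-tubular neighborhood of it, so that the Ahlfors-type bound
\[
|E(s)\cap B(x,r)|\;\leq\; C\,\delta_\tau\, r^{N-1}\qquad\text{for all }x\in\R^N,\ r\geq \delta_\tau/\etab,
\]
holds uniformly in $s\in[0,T]$. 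Splitting the convolution into the core region $|x-y|\leq\sqrt{t-s}$ (where one uses $|G_{t-s}|_\infty\lesssim(t-s)^{-N/2}$ together with the localized bound above) and the Gaussian-tail region, one obtains $\|G_{t-s}\ast\car_{E(s)}\|_\infty\leq C\delta_\tau/\sqrt{t-s}$ whenever $\sqrt{t-s}\geq\delta_\tau/\etab$, while the complementary small range contributes at most $O(\delta_\tau^2)$ via the trivial bound $\car_{E(s)}\leq 1$. Integrating in $s$ gives $|w|_\infty(t)\leq C\sqrt{T}\,\delta_\tau$ and hence $\delta_\tau\leq C\tau^{3/2}\delta_\tau$; for $\tau$ small enough this forces $\delta_\tau=0$, and the bootstrap of Step~5 of Theorem~\ref{unicite-nl} extends uniqueness to the whole interval $[0,T]$.

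The main obstacle is the uniform-in-$s$ validity of the localized measure bound on $|E(s)\cap B(x,r)|$: it rests squarely on the uniform interior cone property of Section~\ref{auxil}, which in turn uses the ${\mathcal C}^2$ regularity of $\partial K_0$ from {\bf (H6)} and the strict positivity $\alpha\geq\uc>0$. A secondary delicate point concerns the bootstrap: when restarting at $\bar\tau>0$, the set $\{u(\cdot,\bar\tau)\geq 0\}$ need no longer have ${\mathcal C}^2$ boundary, so one must verify that the interior cone property persists for positive times directly, rather than being reconstructed from {\bf (H6)} at each restart—this stability is precisely what the optimal-control arguments of Section~\ref{auxil} are designed to deliver.
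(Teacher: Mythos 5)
Your proposal follows the paper's architecture almost step for step: confinement of the fronts via Lemma~\ref{fini-speed}, the reduction $\delta_\tau\lesssim\int_0^\tau|v_1-v_2|_\infty$ through Lemma~\ref{traj} and {\bf (H5)}, the subsolution/Duhamel estimate for $v_1-v_2$ with right-hand side supported in the tube $\{-\delta_\tau\leq u_i\leq 0\}$, the inclusion of that tube in a $2\delta_\tau/\etab$-neighborhood of $K_i(t)$ via Lemma~\ref{visc-increas}, and the uniform interior cone property from Theorem~\ref{propagation-cone}. Where you genuinely diverge is the final Green-kernel step. The paper proves the time-integrated bound of Lemma~\ref{th-estimate-greenkernel}, $|\phi(x,t,r)-\phi(x,t,0)|\leq\Lambda_0 r$, by writing the tube integral with the coarea formula for the signed distance, applying the perimeter-by-volume estimate of Theorem~\ref{interior-cone} to the rescaled level sets $\{\overline{d}_{K(s)}=\sigma\}$ (which inherit the cone property), and splitting the Gaussian at the $s$-dependent radius $R(s)=\sqrt{-2(N-1)\log(t-s)}$; this yields $\delta_\tau\leq L\tau\,\delta_\tau$ directly, with no short-time cutoff. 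You instead prove a pointwise-in-time bound $\|G_{t-s}\ast\car_{E(s)}\|_\infty\leq C\delta_\tau/\sqrt{t-s}$ from a localized Ahlfors-type volume bound $|E(s)\cap B(x,r)|\leq C\delta_\tau r^{N-1}$, pay an $O(\delta_\tau^2)$ price on the range $t-s\lesssim\delta_\tau^2$, and close with $\delta_\tau\leq C\tau^{3/2}\delta_\tau+C\tau\delta_\tau^2$ (which still works since $\delta_\tau\to0$ as $\tau\to0$). This is a legitimate alternative and arguably more transparent; the paper's version buys a cleaner linear-in-$r$ estimate with no exceptional time range.

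One point needs more than you give it: the localized bound $|E(s)\cap B(x,r)|\leq C\delta_\tau r^{N-1}$ does not follow from the \emph{statement} of Theorem~\ref{interior-cone}, which controls $\Hc^{N-1}(\partial K\cap\bar B(0,R))$ by $\Lc^N(K\cap\bar B(0,R+\rho/4))$ and, applied in a small ball $B(x,r)$ with $r\ll\rho$, only yields a bound of order $\rho^N$, not $r^{N-1}$. To get the correct scaling at scales between $\delta_\tau/\etab$ and $\rho$ you must re-enter the proof of Theorem~\ref{interior-cone}: the boundary is covered by finitely many sets, each contained in a Lipschitz graph of constant $\sqrt{(2\theta/\rho)^2-1}$ inside cylinders of size $\rho,\theta$, and the $2\delta_\tau/\etab$-neighborhood of each such graph piece inside $B(x,r)$ has volume $\lesssim\delta_\tau r^{N-1}$; for $r\gtrsim\rho$ one falls back on the global perimeter bound. (Your phrase ``uniformly Lipschitz hypersurface'' overstates what the cone property gives; only this finite graph covering is available, but it suffices.) This is a fillable step, not a fatal one, but as written it is asserted rather than proved, and it is precisely the place where the paper's Lemma~\ref{th-estimate-greenkernel} does the corresponding work. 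Your remarks on the restart at $\bar\tau$ are consistent with the paper: Theorem~\ref{propagation-cone} gives the cone property for all $t\in[0,T]$ at once, so no ${\mathcal C}^2$ regularity of $K_i(\bar\tau)$ is needed.
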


We recall that the existence of weak solutions 
is obtained in \cite{ggi92, ss96}. Moreover, since $\alpha>0$ in $\R,$ 
weak solutions are classical thanks to the results of \cite{ley01}. 
Before giving the uniqueness proof, we start by a
preliminary on the inhomogeneous heat equation.

\subsection{Classical estimates for the inhomogeneous heat equation.}
\label{est-he}

We first gather some regularity results for the solutions of the heat equation (\ref{heat-eq}). The explicit 
resolution of the heat equation \eqref{heat-eq} 
shows that for any $(x,t) \in \R^N \times [0,T]$,
$$
\begin{aligned}
v(x,t)=\int_{\R^N} &G(x-y,t)  \, v_0(y) \, dy \\ 
&+ \int_0^t \int_{\R^N} G(x-y,t-s)  \, [g^+(v)\chi +g^-(v)(1-\chi )](y,s) \, dy ds,
\end{aligned}
$$
where $G$ is the Green function defined by
\begin{equation} \label{green}
G(y,s)=\frac{1}{(4\pi s)^{N/2}} e^{-\frac{|y|^2}{4s}}.  
\end{equation}

It is then easy to obtain the following lemma.

\begin{lemma}\label{reg-velocity}  
Assume that {\bf (H4)} holds.
For $\chi \in L^{\infty}(\R^N \times [0,T];[0,1])$, let $v$ be the unique
solution of \eqref{heat-eq}. Set $\gamma=\max \{|\underline{g}|,|\overline{g}|\}.$ 
Then there exists a constant $k_N$ depending only on $N$ such that
\begin{itemize}

\item[(i)] $v$ is uniformly bounded: for all $(x,t) \in \R^N \times [0,T],$
$$
|v(x,t)| \leq |v_0|_{\infty} + \gamma t.
$$ 

 \item[(ii)] $v$ is continuous on $\R^N \times [0,T]$.

 \item[(iii)]  For any $t \in [0,T]$, $v(\cdot,t)$ is of 
class ${\mathcal C}^1$ in $\R^N$.

 \item[(iv)] For all $t \in [0,T]$, $x,y \in \R^N$,  
$$|v(x,t)-v(y,t)|\leq (\, |Dv_0|_{\infty} +\gamma k_N \,\sqrt{t}) \,|x-y|.$$

 \item[(v)]  For all $0\leq s \leq t \leq T,$ $x \in \R^N$,  
$$
|v(x,t)-v(x,s)|\leq k_N
(|Dv_0|_{\infty}+ \gamma k_N 
\,\sqrt{s}) \,\sqrt{t-s} + \gamma (t-s).
$$

\end{itemize}
\end{lemma}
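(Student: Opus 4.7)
The plan is to base everything on the Duhamel representation
$$v(x,t)=\int_{\R^N}G(x-y,t)v_0(y)\,dy+\int_0^t\int_{\R^N}G(x-z,t-s)h(z,s)\,dz\,ds,$$
where $h(z,s):=g^+(v(z,s))\chi(z,s)+g^-(v(z,s))(1-\chi(z,s))$ satisfies $|h|\le\gamma$ by \textbf{(H4)}. Since $G(\cdot,\tau)\ge 0$ and $\int G(\cdot,\tau)=1$, item (i) is immediate from $|v_0|_\infty\le |v_0|_\infty$ on the first term and from $\gamma t$ on the second. Item (ii) will follow from (iv) and (v) once those are proven, so I would defer it.

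For (iii) and (iv), I would differentiate the Duhamel formula in $x$: in the first term move the derivative onto $v_0$ (which is $\mathcal{C}^1$ with $|Dv_0|_\infty<\infty$ by \textbf{(H4)}), yielding a contribution uniformly bounded by $|Dv_0|_\infty$. In the second term I keep the derivative on the kernel and use
$$|\nabla_x G(w,\tau)|=\frac{|w|}{2\tau}G(w,\tau),\qquad \int_{\R^N}|\nabla G(w,\tau)|\,dw=\frac{c_N}{\sqrt{\tau}},$$
so that $\int_0^t c_N/\sqrt{t-s}\,ds=2c_N\sqrt{t}$. This is integrable, which simultaneously gives continuous differentiability of $v(\cdot,t)$ (by the dominated convergence theorem applied to the difference quotients) and the estimate
$$|Dv(\cdot,t)|_\infty\le |Dv_0|_\infty+\gamma\cdot 2c_N\sqrt{t},$$
establishing (iii) and (iv) with $k_N=2c_N$.

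The delicate point is (v), since the $\sqrt{t-s}$ rate is the typical parabolic scaling and cannot be obtained by naively differentiating Duhamel in $t$. The trick I would use is the semigroup property: for $0\le s\le t$ and fixed $r\in[0,s]$, the map $(x,\tau)\mapsto K(x,\tau,r):=G(\cdot,\tau)*h(\cdot,r)$ solves the homogeneous heat equation for $\tau>0$, so that
$$K(x,t-r,r)=\int_{\R^N}G(x-y,t-s)K(y,s-r,r)\,dy.$$
Using the Lipschitz bound on $K(\cdot,s-r,r)$ from the same kernel computation as above ($\le\gamma c_N/\sqrt{s-r}$) and $\int G(x-y,t-s)|x-y|\,dy\le c_N\sqrt{t-s}$, the same semigroup identity applied to the initial-data part (with Lipschitz constant $|Dv_0|_\infty$) produces
$$|v(x,t)-v(x,s)|\le c_N|Dv_0|_\infty\sqrt{t-s}+\gamma c_N^2\sqrt{t-s}\int_0^s\frac{dr}{\sqrt{s-r}}+\gamma(t-s),$$
where the last term accounts for the integral of $h$ on $[s,t]$. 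Since $\int_0^s(s-r)^{-1/2}dr=2\sqrt{s}$, adjusting $k_N$ yields the announced estimate. Once (iv) and (v) are in hand, continuity in (ii) follows immediately.

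Finally, item (ii) is a direct corollary: for $(x_n,t_n)\to(x,t)$, the bound $|v(x_n,t_n)-v(x,t)|\le|v(x_n,t_n)-v(x,t_n)|+|v(x,t_n)-v(x,t)|$ combined with (iv) and (v) gives continuity on $\R^N\times[0,T]$. The main technical obstacle is the second piece of (v), where the Lipschitz constant of $G(\cdot,\tau)*h(\cdot,r)$ blows up like $1/\sqrt{\tau}$; this must be matched exactly against the $\sqrt{t-s}$ gained from the semigroup identity to keep the $r$-integral convergent.
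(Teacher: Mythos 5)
Your proposal is correct and follows essentially the same route as the paper, which simply writes the Duhamel representation with the Green function $G$ and declares the estimates (i)--(v) "easy to obtain" from it; your kernel computations ($|\nabla G(\cdot,\tau)|_{L^1}\sim \tau^{-1/2}$, first moment of $G(\cdot,\tau)\sim\sqrt{\tau}$, semigroup splitting for the time increment) are exactly the standard details behind that claim. The constants work out ($k_N$ can be taken as a fixed dimensional constant absorbing the various $c_N$'s), so nothing further is needed.
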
  
In particular the velocity $c[\chi]$ (given here by $\alpha(v)$) 
is bounded, continuous on $\R^N \times [0,T]$ and Lipschitz continuous in space, 
uniformly with respect to $\chi$. It follows that \eqref{eq-gene2} has
a unique continuous (classical) viscosity solution for all 
$\chi \in L^{\infty}(\R^N \times [0,T];[0,1]).$

\subsection{Proof of Theorem \ref{uniqueness-fn}}

\noindent{\it 1. Properties of the velocity.} 
As explained above, for any measurable subset
$A$ of $\R^N,$ the velocity $c[\car_{A}]$ in
\eqref{dyseqn} satisfies {\bf (H1)} with constants which are uniform in
$A$: for all $x,x'\in\R^N,$ $t\in [0,T],$
\begin{eqnarray*}
&\uc \leq c[\car_{A}]\leq \oc& \\
& |c[\car_{A}](x,t)-c[\car_{A}](x',t)|\leq \tilde{C}|x-x'|,&
\end{eqnarray*}
with $\tilde{C}:= C(|Dv_0|_\infty +\gamma k_N\sqrt{T}).$
By Lemma \ref{fini-speed}, it follows that the set $\{u(\cdot, t)\geq~0\}$ remains in a 
fixed ball $\bar{B}(0,R_0+\overline{c}T)$ of $\R^N.$\\ 

\noindent{\it 2. First estimate (eikonal equation).} 
We start as in the proof of Theorem \ref{unicite-nl}.
Let $u_1, u_2$ be two solutions of \eqref{dyseqn}
and $v_1, v_2$ be the solutions of \eqref{heat-eq} associated with 
$u_1,u_2$ respectively.
For $0\leq \tau\leq T,$ we set
\begin{eqnarray*}
\delta_\tau :=\sup_{\R^N\times [0,\tau]} \, |u_1 ( x,t) - u_2 (x,t)|
\end{eqnarray*}
and we choose $\tau$ small enough in order that
$\delta_\tau < \eta/2$ where 
$\eta$ is given by applying Theorem \ref{thm-borneinf}
to the $u_i$'s. By Lemma~\ref{traj}, we have
\begin{eqnarray}
\delta_\tau &\leq &
|D u_0|_\infty {\rm e}^{\tilde{C} \tau}  \int_0^\tau 
|(c[\car_{\{u_1 \geq 0\}}]-c[\car_{\{u_2 \geq 0\}}])(\cdot ,t)|_\infty dt
\nonumber \\
&\leq &
|D u_0|_\infty {\rm e}^{\tilde{C} \tau}  \int_0^\tau 
|(\alpha(v_1)-\alpha(v_2))(\cdot ,t)|_\infty dt
\nonumber \\
&\leq &
C |D u_0|_\infty {\rm e}^{\tilde{C} T} 
 \int_0^\tau |(v_1-v_2)(\cdot ,t)|_\infty dt.
\label{calc-hea}
\end{eqnarray}
It remains to estimate $|(v_1-v_2)(\cdot ,t)|_\infty.$\\

\noindent{\it 3. Second Estimate (heat equation).} 
The function $v=v_1-v_2$ solves
\begin{eqnarray*}
v_t-\Delta v &=& 
(\1_{\{ u_1 \geq 0 \}}-\1_{\{ u_2 \geq 0 \}}) (g^+(v_1)-g^-(v_1)) \\ 
&&+  \1_{\{ u_2 \geq 0 \}} (g^+(v_1)-g^+(v_2)) - \1_{\{ u_2 \geq 0 \}} (g^-(v_1)-g^-(v_2))\\
&& +(g^-(v_1)-g^-(v_2))
\end{eqnarray*}  
in $\R^N\times [0,T].$
Since $g^+$ and $g^-$ are Lipschitz continuous, say with Lipschitz constant $M$, we have
$$
| \1_{\{ u_2 \geq 0 \}} (g^+(v_1)-g^+(v_2)) - \1_{\{ u_2 \geq 0 \}} (g^-(v_1)-g^-(v_2)) 
+(g^-(v_1)-g^-(v_2))| \leq 3M |v|.
$$
Moreover
$$
|\1_{\{ u_1 \geq 0 \}}-\1_{\{ u_2 \geq 0 \}}| \, |g^+(v_1)-g^-(v_1)|
\leq (\overline{g}-\underline{g})|\1_{\{ u_1 \geq 0 \}}-\1_{\{ u_2 \geq 0 \}}|,
$$
by {\bf (H4)}.
This implies that both $v$ and $-v$ are viscosity subsolutions of 
$$
w_t-\Delta w -3M|w| = 
(\overline{g}-\underline{g})|\1_{\{ u_1 \geq 0 \}}-\1_{\{ u_2 \geq 0
  \}}|
\ \ \ {\rm in} \ \R^N\times [0,T],
$$
whence $|v|=\max\{ v,-v\}$ is also a subsolution as the maximum of two
subsolutions. Therefore we have
$$
|v|_t-\Delta |v| -3M|v| 
\leq (\overline{g}-\underline{g})|\1_{\{ u_1 \geq 0 \}}-\1_{\{ u_2\geq 0 \}}|
\ \ \ {\rm in} \ \R^N\times [0,T].
$$
In particular the function $w:(x,t)\mapsto e^{-3Mt}|v(x,t)|$ satisfies 
$$
w_t - \Delta w \leq (\overline{g}-\underline{g})\, 
{\rm e}^{-3Mt}|\1_{\{ u_1 \geq 0 \}}-\1_{\{ u_2 \geq 0 \}}|
\ \ \ {\rm in} \ \R^N\times [0,T].
$$
By the comparison principle, since $w(\cdot ,0)=0,$ 
we have for any $(x,t) \in \R^N \times [0,\tau],$
$$
w(x,t) \leq \int_0^t \int_{\R^N} G(x-y,t-s)  \, 
(\overline{g}-\underline{g})\, {\rm e}^{-3Ms} \,
|\1_{\{ u_1 \geq 0 \}}-\1_{\{ u_2 \geq 0 \}}|(y,s) \, dy ds.
$$
Using the definition of $\delta_\tau$, we have
$$
|\1_{\{ u_1 \geq 0 \}}-\1_{\{ u_2 \geq 0 \}}|(y,s) \leq 
\car_{\{-\delta_\tau \leq u_1 < 0\}} 
+ \car_{\{-\delta_\tau\leq u_2 < 0\}}\;.
$$
This implies that 
for any $(x,t) \in \R^N \times [0,\tau]$,
\begin{eqnarray}
&& |v_1(x,t)-v_2(x,t)| \label{form129}\\ 
\nonumber 
&\leq &
(\overline{g}-\underline{g})\, {\rm e}^{3MT} 
\int_0^{t} \int_{\R^N} G(x-y,t-s) \, 
\left( \car_{\{-\delta_\tau \leq u_1 < 0\}} 
+ \car_{\{-\delta_\tau \leq u_2 < 0\}}\right) dy ds.
\end{eqnarray}
  
For simplicity, we set $B=\bar{B}(0,1)$ and
\begin{eqnarray*}
K_i(t)=\{u_i(\cdot,t) \geq 0\} \ \ \ {\rm for} \ i=1,2.
\end{eqnarray*}  

\noindent{\it 4. We claim that $\{-\delta_\tau \leq u_i(\cdot ,t) < 0\}
\subset (K_i(t)+2\delta_\tau B/\bar{\eta})\setminus K_i(t) $
where $\bar{\eta}$ is given by \eqref{bornepresfront}.} 
Indeed let $x\in\R^N$ be such that $-\delta_\tau \leq u_i(x,t) < 0.$
Since we chose $\delta_\tau$ small enough in Step 2, 
\eqref{bornepresfront} holds and Lemma \ref{visc-increas}
implies that there exists 
$y\in \bar{B}(x,2\delta_\tau/\bar{\eta})$ such that
$u_i(y,t)\geq u_i(x,t)+\delta_\tau\geq 0.$ This proves the
claim.\\

\noindent{\it 5. Use of an interior cone property 
for the $K_i(t)$'s.}
Note that $\{-\delta_\tau \leq u_i(\cdot ,t) \leq 0\}
\setminus \{-\delta_\tau \leq u_i(\cdot ,t) < 0\}$
has a 0 Lebesgue measure since the velocity is nonnegative
(cf. \cite{ley01}).
Then, from \eqref{form129} and Step 4, we obtain
\begin{eqnarray}
&& |v_1(x,t)-v_2(x,t)| \label{abf1}\\
\nonumber 
&\leq &
(\overline{g}-\underline{g})\, {\rm e}^{3MT} 
\int_0^{t} \int_{\R^N} G(x-y,t-s) \, 
 (\1_{E_1(t)}(y) + 
\1_{E_2(t)}(y))\, dy ds
\end{eqnarray} 
where $E_i(t)=(K_i(t)+2\delta_\tau B/\bar{\eta})\setminus K_i(t)$ for $i=1, 2$. 

We are now going to use the fact that the sets $K_1(t)=\{ u_1(\cdot,t)
\geq 0 \}$ and $K_2(t)=\{ u_2(\cdot,t) \geq 0 \}$ 
have the {interior cone property} (see Definition \ref{def-int-cone-prop})
for all $t \in [0,T]$, for some parameters $\rho$ and $\theta$ independent of $t$: 
\begin{lemma} 
There exist $\rho$ and $\theta$ depending only on the data 
($\alpha$, $u_0$, $v_0$, $g^+$ and $g^-$) such 
that $0<\rho<\theta<1$ and $K_i(t)$ has the interior cone property 
of parameters $\rho$ and $\theta$ for all $t \in [0,T]$. 
\end{lemma}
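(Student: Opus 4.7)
The plan is to realise $K_i(t)$ as a reachable set and to transport an interior ball of $K_0$ forward in time. By the level–set representation for the Eikonal equation (see Lemma~\ref{traj} and the optimal–control interpretation of $u_t=c|Du|$), a point $x$ lies in $K_i(t)=\{u_i(\cdot,t)\geq 0\}$ exactly when there exists an absolutely continuous curve $z:[0,t]\to\R^N$ with $z(0)\in K_0$, $z(t)=x$ and $|\dot z(s)|\leq c[\chi_i](z(s),s)$ a.e. By Step~1 of the proof of Theorem~\ref{uniqueness-fn}, the velocity $c[\chi_i]$ lies in $[\uc,\oc]$ and is $\tilde C$-Lipschitz in space, with constants \emph{uniform in $\chi_i$ and $t$}, so all of what follows will use only these constants together with data coming from $u_0$, $v_0$.

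The second ingredient is the $\mathcal C^2$ regularity of $\partial K_0$ from {\bf (H6)}: there exists $r_0>0$ such that for every $y_0\in\partial K_0$ with outer unit normal $n_0$, one has $B(y_0-r_0 n_0,r_0)\subset K_0$. This is the uniform interior-ball property at the initial time.

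Given $x\in\partial K_i(t)$, a standard compactness argument in the space of admissible trajectories produces a backward extremal $y:[0,t]\to\R^N$ with $y(t)=x$, $|\dot y(s)|=c[\chi_i](y(s),s)$ a.e., and $y_0:=y(0)\in\partial K_0$ (if $y_0$ were interior, one could prolong $y$ and obtain points of $K_i(t)$ beyond $x$, contradicting $x\in\partial K_i(t)$). I would then transport the interior ball $B(y_0-r_0 n_0,r_0)$ along curves parallel to $y$. For each $z_0$ in this ball the naive choice $z(s)=y(s)+(z_0-y_0)$ violates the speed bound by at most $\tilde C|z_0-y_0|$; a Filippov-type selection (or, equivalently, a mild time reparameterisation by a factor $1-\mu$ with $\mu=2r_0\tilde C/\uc$) produces an \emph{admissible} trajectory $z(\cdot)$ with $z(0)=z_0$ and
\[
|z(s)-y(s)-(z_0-y_0)|\;\leq\;C'\,|z_0-y_0|,\qquad s\in[0,t],
\]
for some $C'=C'(\tilde C,T)$. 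The endpoints $z(t)$ all lie in $K_i(t)$ and fill a Lipschitz image of $B(y_0-r_0 n_0,r_0)$ whose Lipschitz distortion from the rigid translate $x+(B(y_0-r_0 n_0,r_0)-y_0)$ is controlled by $C'$.

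Finally, by choosing the working radius as a small fraction of $r_0$ (small enough that $C'$ times that fraction stays well below the geometric thresholds set by $r_0$ and $n_0$), this image contains a genuine half-ball at $x$, from which a cone $C(x,\xi,\rho,\theta)\subset K_i(t)$ is extracted in the direction of the inward transported normal, with parameters $0<\rho<\theta<1$ depending only on $r_0$, $\uc$, $\oc$, $\tilde C$ and $T$ — hence independent of $i\in\{1,2\}$ and of $t\in[0,T]$. The main obstacle is this quantitative distortion estimate in Step~3: one must rule out degeneration of $(\rho,\theta)$, i.e.\ check that the axis of the cone does not tilt too much and the opening does not collapse, uniformly over $t\in[0,T]$ and over both solutions $u_1,u_2$, so that a single pair $(\rho,\theta)$ works in the estimate \eqref{abf1} used to close the uniqueness argument.
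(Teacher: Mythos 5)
There is a genuine gap, and it sits exactly where you flag it: the passage from the transported interior ball to a cone with vertex at $x$, with parameters uniform in $t\in[0,T]$. Your Filippov/reparametrisation step only yields a distortion estimate of the form $|z(s)-y(s)-(z_0-y_0)|\leq C'|z_0-y_0|$ with $C'$ of Gronwall type, i.e.\ of order $e^{\tilde C t}-1$ (or $kt$), which is \emph{not small} for $t$ of order $T$. Such an estimate says that the map $z_0\mapsto z(t)$ is bi-Lipschitz with controlled constants, but a bi-Lipschitz image of a ball (or of a cone) touching $x$ need not contain any cone with vertex at $x$ once the distortion is not small: a logarithmic-spiral type bi-Lipschitz map of $\R^2$ sends a sector with vertex at the origin onto a spiralling region containing no cone at the origin. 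Shrinking the ``working radius'' does not help, because your error bound is homogeneous in $|z_0-y_0|$: reducing the radius reduces the error and the available margin proportionally, so the relative distortion is unchanged. Note also that if you transport the ball $\bar B(y_0-r_0n_0,r_0)$ itself, the margin available at points near the tangency point $y_0$ is only quadratic in the distance to $y_0$, while the transport error is linear, so even a small $C'$ would not let the ball survive near the vertex; one must transport a cone (linear margin), and even then only a small Gronwall factor can be absorbed.

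The missing idea is the two-regime structure of the paper's argument (Theorem \ref{propagation-cone}, invoked via Step 1 of the uniqueness proof). For $\bar t\geq\mu$, where $\mu>0$ depends only on $\underline c,\overline c,\tilde C$ and the time-modulus of the velocity, the cone is created by the \emph{expansion} itself and makes no reference to $K_0$: since $c\geq\underline c>0$, the minimal time function $v$ is $1/\underline c$-Lipschitz, and the Pontryagin adjoint equation together with the time-modulus $\omega_R$ of the velocity (assumption {\bf (H7)}, satisfied here thanks to the $\sqrt{t-s}$ estimate of Lemma \ref{reg-velocity}) shows that an extremal trajectory is nearly straight on short time intervals; hence the balls $\bar B\bigl(x(\bar t)-x'(\bar t)(\bar t-t),\tfrac{\underline c}{2}(\bar t-t)\bigr)$ lie in $\{v\leq\bar t\}=K(\bar t)$ and sweep out a cone at $x$ of parameters independent of $\bar t$. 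Only for $\bar t\leq\mu$ does one transport the initial regularity of $K_0$, and there the transport is of a cone along the adjoint flow $\dot y=c(y,t)\,p(t)/|p(t)|$, with the Gronwall factor harmless precisely because $\bar t$ is small ($k\bar t\leq 3/4$). Your proposal is essentially an attempt to run this small-time transport globally in time, which is where the uniform parameters $(\rho,\theta)$ degenerate; without the large-time mechanism based on the strictly positive velocity and the minimal time function, the lemma is not proved.
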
 
\noindent This lemma is an application of Theorem \ref{propagation-cone} below
(see Section \ref{ppg-cone}), 
the assumptions of which are satisfied for $u_1,u_2$ because of Step 1.
It follows that we can use the following lemma which is proved Section \ref{green-kern}:
\begin{lemma} \label{th-estimate-greenkernel}  
Let $\{ K(t) \}_{t \in [0,T]} \subset \bar{B}(0,R) \times [0,T]$ be a 
bounded family of compact subsets of $\R^N$ having the interior cone 
property of parameters $\rho$ and $\theta$ with $0<\rho<\theta<1$ 
and $R>0,$ 
and let us set, for any $x \in \R^N$, $t \in [0,T]$ and $r \geq 0$,  
$$  
\phi(x,t,r)=\int_0^t \int_{\R^N} G(x-y,t-s) \, \1_{K(s)+rB}(y) \, dy ds.  
$$  
Then for any $r_0 >0$ and $0\leq \tau <1,$ there exists a constant 
$\Lambda_0=\Lambda_0(\tau,N,R,r_0,\rho,\theta/\rho)$ 
such that for any $x \in \R^N$, $t \in [0,\tau]$ and $r \in [0,r_0]$,  
$$  
|\phi(x,t,r)-\phi(x,t,0)| \leq \Lambda_0 \, r.  
$$   
\end{lemma}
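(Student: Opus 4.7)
The plan is to rewrite
\[
\phi(x,t,r) - \phi(x,t,0) = \int_0^t \int_{\R^N} G(x-y, t-s) \1_{E(s)}(y) \, dy \, ds, \quad E(s):= (K(s)+rB)\setminus K(s),
\]
and to estimate the inner spatial integral uniformly in $s<t$ by a quantity of the form $Cr/\sqrt{t-s}$, which is integrable in $s$ and yields the desired bound linear in $r$ after integration from $0$ to $t\leq \tau$.

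The key geometric ingredient is that the interior cone property equips $\partial K(s)$ with a quantitative local graph structure uniform in $s\in [0,T]$. Specifically, after a suitable rotation at any $y_0\in \partial K(s)$, the set $K(s)$ coincides near $y_0$ with the epigraph of a Lipschitz function $f$ whose Lipschitz constant $L$ depends only on $\theta/\rho$. Since $K(s)\subset \bar B(0,R)$, we can cover $\partial K(s)$ by balls $B(y_j,\rho/4)$, $j=1,\dots,M$, with $M\leq M_0(N,R,\rho)$; in each chart, a simple distance-to-the-graph computation places $E(s)\cap B(y_j,\rho/4)$ inside the curved slab $\{(y',y_N):f_j(y')<y_N\leq f_j(y')+\kappa r\}$ with $\kappa=\sqrt{1+L^2}$, provided $r\leq r_0$ is small compared to $\rho$. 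Points of $E(s)$ outside $\bigcup_j B(y_j,\rho/4)$ are too far from $\partial K(s)$ to belong to $E(s)$ when $r\leq r_0$, so they contribute nothing.

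For the heat-kernel integral on one such slab, apply the change of variables $(z',z_N)=(y',y_N-f_j(y'))$ (Jacobian $1$) and the product decomposition
\[
G(x-y,t-s) = G_{N-1}(x'-y',t-s)\,G_1(x_N-y_N,t-s).
\]
Using $G_1\leq (4\pi(t-s))^{-1/2}$, the $z_N$-integral over $[0,\kappa r]$ is at most $\kappa r/\sqrt{4\pi(t-s)}$, while the remaining $z'$-integral of $G_{N-1}$ against a positive bump is at most $1$. Summing over the $M_0$ charts and integrating in time gives
\[
|\phi(x,t,r)-\phi(x,t,0)| \;\leq\; \frac{M_0\kappa}{\sqrt{4\pi}}\, r \int_0^t \frac{ds}{\sqrt{t-s}} \;=\; \frac{M_0\kappa}{\sqrt{\pi}}\, r\sqrt{t} \;\leq\; \Lambda_0\, r,
\]
with $\Lambda_0$ depending only on $\tau,N,R,r_0,\rho,\theta/\rho$, as required.

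The main obstacle, I expect, is not the Gaussian computation but the quantitative "interior cone implies uniform Lipschitz graph" step together with the construction of the covering: one needs both the graph Lipschitz constant $L$ and the number of charts $M_0$ to be explicitly controlled by $\rho$, $\theta/\rho$, $R$, and $N$, uniformly in $s\in[0,T]$, and one must verify the slab containment for all $r\in[0,r_0]$. Once these uniform ingredients are in hand, the integration in time is elementary thanks to the integrable singularity $(t-s)^{-1/2}$.
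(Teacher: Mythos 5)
Your Gaussian slab computation and the time integration are fine, but the geometric claim the whole proof rests on is false: the interior cone property of Definition \ref{def-int-cone-prop} does \emph{not} imply that, near each boundary point and after a rotation, $K(s)$ coincides with the epigraph of a Lipschitz function with constant controlled by $\theta/\rho$. The cone direction $\nu$ is allowed to change from point to point, and this destroys the graph structure. Concretely, let $K$ be the union of two externally tangent closed balls of radius $1$: away from the contact point the interior ball property gives interior cones with uniform parameters, and at the contact point a cone pointing into either ball works, so $K$ has the interior cone property with parameters depending only on the radius; yet near the contact point $\partial K$ consists of two spherical caps meeting only at that point, so in no neighbourhood and in no rotated frame is $K$ the epigraph of a (Lipschitz, or even continuous) function. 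Since the chart covering, the containment of $E(s)\cap B(y_j,\rho/4)$ in a slab of thickness $\kappa r$ over a graph, and the product decomposition of $G$ all hinge on this local epigraph representation, the argument as written does not go through. (A smaller, fixable point: you assume $r$ small compared with $\rho$, whereas the lemma allows any $r\in[0,r_0]$; for $r\geq c\rho$ the bound is trivial because $\int_0^t\int_{\R^N}G(x-y,t-s)\,dy\,ds\leq \tau$, hence $|\phi(x,t,r)-\phi(x,t,0)|\leq (\tau/(c\rho))\,r$, but this case has to be mentioned.)

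What the cone property really gives---and what the paper uses---is weaker: after restricting to finitely many cone axes, each piece of $\partial K$ is \emph{contained in} (not equal to) a Lipschitz graph, which yields the uniform perimeter bound of Theorem \ref{interior-cone}, namely $\mathcal{H}^{N-1}(\partial K\cap\bar{B}(0,R))\leq\Lambda\,\mathcal{L}^{N}(K\cap\bar{B}(0,R+\rho/4))$ with $\Lambda=\Lambda(N,\rho,\theta/\rho)$. The paper's proof of the present lemma then writes $\1_{K(s)+rB}-\1_{K(s)}=\1_{\{0<\overline{d}_{K(s)}\leq r\}}$ with $\overline{d}_{K(s)}$ the signed distance, applies the coarea formula so that the collar integral becomes an integral over $\sigma\in(0,r]$ of surface integrals of $G$ over the level sets $\{\overline{d}_{K(s)}=\sigma\}$ (which bound the dilated sets $K(s)+\sigma B$ and inherit the interior cone property), rescales by $z=(x-y)/\sqrt{t-s}$, and splits each surface integral into $|z|\leq R(s)$ and $|z|>R(s)$ with $R(s)=\sqrt{-2(N-1)\log(t-s)}$ so that the Gaussian tail compensates the factor $(t-s)^{-(N-1)/2}$; Theorem \ref{interior-cone} controls both pieces and the resulting time integral converges, giving the linear-in-$r$ bound. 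If you replace your ``interior cone implies local Lipschitz epigraph'' step by this coarea-plus-perimeter argument (or by any bound of order $r\delta^{N-1}$ for the volume of $((K(s)+rB)\setminus K(s))\cap B(z,\delta)$ derived from the perimeter estimate), the rest of your computation can be salvaged; as it stands, the key geometric lemma you invoke is simply not available.
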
  
\noindent We apply this lemma to the $K_i(t)$'s which verify the assumptions 
with $R=R_0+\oc T$ by Step 1 and since we can assume that $\tau <1$. From 
\eqref{calc-hea} and \eqref{abf1},
we finally obtain that 
\begin{equation*}  
\delta_\tau \leq L\tau \delta_\tau
\end{equation*}  
where $L=L(T,C,  \tilde{C}, |Du_0|_\infty, \underline{g}, \overline{g},
M, \bar{\eta}, \Lambda_0).$
Choosing $\tau$ such that $L\tau <1,$ we obtain $\delta_\tau=0.$
We conclude as in the proof of Theorem \ref{unicite-nl}. 
\hfill $\Box$\\

\subsection{Proof of Lemma \ref{th-estimate-greenkernel}}  \label{green-kern}

\noindent For any $x \in \R^N$, $t \in [0,\tau]$ and $r \geq 0$,  
$$  
\phi(x,t,r)-\phi(x,t,0)=\int_0^t \int_{\R^N} G(x-y,t-s) \, 
\left( \1_{K(s)+rB} - \1_{K(s)} \right) (y)\, dy ds.  
$$  
Let $\overline{d}_{K(s)}$ denote the signed distance function to $K(s)$, namely  
\begin{equation*}  
\left\lbrace  
\begin{array}{cc}  
\overline{d}_{K(s)}(x)=d_{K(s)}(x) & \text{if} \ x \notin K(s), \\  
\overline{d}_{K(s)}(x)= -d_{\partial K(s)}(x) & \text{if} \ x \in K(s),  
\end{array}
\right.  
\end{equation*}  
where, for any $A\subset \R^N,$ $d_A$ is the usual distance to $A.$
Then $\1_{K(s)+rB} - \1_{K(s)}=\1_{\{0 < \overline{d}_{K(s)} \leq r\}}$, so that  
$$  
\phi(x,t,r)-\phi(x,t,0)=\int_0^t 
\int_{\{0 < \overline{d}_{K(s)} \leq r\}} G(x-y,t-s) \, dy ds.  
$$  
Since $\overline{d}_{K(s)}$ is Lipschitz continuous with 
$|D \overline{d}_{K(s)}|=1$ 
almost everywhere, the coarea formula (see \cite{eg92}) shows that  
$$  
\begin{aligned}  
\phi(x,t,r)-\phi(x,t,0)  
&=\int_0^t \int_0^r \int_{\{ \overline{d}_{K(s)} =\sigma\}} 
G(x-y,t-s) \, d\Hc^{N-1}(y) d\sigma ds\\  
&=\int_0^t \int_0^r d\sigma \int_{\{ \overline{d}_{K(s)} =\sigma\}} 
\frac{1}{(4\pi (t-s))^{N/2}} e^{-\frac{|x-y|^2}{4(t-s)}} \, d\Hc^{N-1}(y) ds.  
\end{aligned}  
$$  
The change of variable $z=\frac{x-y}{\sqrt{t-s}}$ in this last integral yields  
$$  
\phi(x,t,r)-\phi(x,t,0)=\frac{1}{(4 \pi)^{N/2}} \int_0^r d\sigma
\int_0^t 
\frac{1}{\sqrt{t-s}} \int_{\zeta_{s,\sigma}}  e^{-\frac{|z|^2}{4}} \, d\Hc^{N-1}(z) ds,  
$$  
where we have set $$\zeta_{s,\sigma}= \left\lbrace
\frac{y-x}{\sqrt{t-s}};\; 
\overline{d}_{K(s)}(y) =\sigma \right\rbrace.$$ For some $R(s)$ 
to be precised later, we split 
$\int_{\zeta_{s,\sigma}}  e^{-\frac{|z|^2}{4}} \, d\Hc^{N-1}(z)$ 
in two parts, one in $B_{R(s)}=\bar{B}(0,R(s))$, and one in $B_{R(s)}^c$. 
First, for any $s \in [0,t)$ and $\sigma >0$,  
\begin{eqnarray*}    
 && \hspace*{-3cm} \int_{\zeta_{s,\sigma} \cap B_{R(s)}}  e^{-\frac{|z|^2}{4}} \,  d\Hc^{N-1}(z) \\
 &\leq & \Hc^{N-1}(\zeta_{s,\sigma} \cap B_{R(s)}) \\
 & \leq & \Lambda(N,\rho,\theta / \rho) {\mathcal{L}}^N(B(0,1)) (R(s)+\rho/4)^N\\  
 &\leq & \Lambda(N,\rho,\theta / \rho) \, {\mathcal{L}}^N(B(0,1)) (R(s)+1)^N  
\end{eqnarray*}  
where $\Lambda(N,\rho,\theta / \rho)$ is the constant given by Theorem
\ref{interior-cone}. 
Indeed, for any $s \in [0,t)$ and $\sigma >0$,  
$$  
\zeta_{s,\sigma}=\partial \left\lbrace \frac{y-x}{\sqrt{t-s}};\; 
\overline{d}_{K(s)}(y) <\sigma \right\rbrace,  
$$  
and these sets inherit the interior cone property of parameters 
greater than $\rho/\max(\sqrt{\tau},1)=\rho$ and
$\theta/\max(\sqrt{\tau},1)=\theta$ 
from $K(s)$ (we recall that we have assumed $\tau<1$). Besides  
\begin{eqnarray*}  
&& \hspace*{-1.5cm} \int_{\zeta_{s,\sigma} \cap B_{R(s)}^c}  e^{-\frac{|z|^2}{4}} \,
d\Hc^{N-1}(z) \\
&\leq & e^{-\frac{R(s)^2}{4}} \Hc^{N-1}(\zeta_{s,\sigma}) \\ 
&\leq &
e^{-\frac{R(s)^2}{4}} \frac{1}{(t-s)^{\frac{N-1}{2}}}
\Hc^{N-1}(\left\lbrace \overline{d}_{K(s)} =\sigma \right\rbrace) \\ 
&\leq & e^{-\frac{R(s)^2}{4}} \frac{1}{(t-s)^{\frac{N-1}{2}}} \, 
\Lambda(N,\rho,\theta / \rho) \, {\mathcal{L}}^N(B(0,1)) (R+r_0+\rho/4)^N \\  
& \leq & e^{-\frac{R(s)^2}{4}} \frac{1}{(t-s)^{\frac{N-1}{2}}} \, 
\Lambda(N,\rho,\theta / \rho) \, {\mathcal{L}}^N(B(0,1)) (R+r_0+1)^N,  
\end{eqnarray*}  
because $\left\lbrace \overline{d}_{K(s)} \leq \sigma \right\rbrace 
\subset B_{R+r_0}$ for any $s \in [0,\tau]$ and $r \in [0,r_0]$. 
This last estimate also comes from Theorem \ref{interior-cone} 
for the same reason as above. Thus we have proved the existence 
of a constant   
$$  
\Lambda_1=\Lambda_1(N,R,r_0,\rho,\theta / \rho)=\frac{1}{(4 \pi)^{N/2}} 
\Lambda(N,\rho,\theta / \rho) \, {\mathcal{L}}^N(B(0,1)) (R+r_0+1)^N
$$   
such that for any $x \in \R^N$, $t \in [0,\tau]$ and $r \in [0,r_0]$,  
\begin{equation}\label{majphi}
| \phi(x,t,r)-\phi(x,t,0)| \leq \Lambda_1 \, r \, \int_0^t 
\frac{1}{\sqrt{t-s}} \left( (R(s)+1)^N 
+  \frac{e^{-\frac{R(s)^2}{4}}}{(t-s)^{\frac{N-1}{2}}} \right) \,  ds.  
\end{equation}
Choosing $R(s)=\sqrt{-2(N-1) {\rm log}(t-s)}$, 
so that $e^{-\frac{R(s)^2}{4}}=(t-s)^{\frac{N-1}{2}}$, 
we can estimate the right-hand side of (\ref{majphi})  as follows:
\begin{equation*}  
\begin{aligned}  
 & \int_{0}^t \frac{1}{\sqrt{t-s}} 
\left( (R(s)+1)^N +
\frac{e^{-\frac{R(s)^2}{4}}}{(t-s)^{\frac{N-1}{2}}} \right) \,  ds \\
 & \leq  \int_{0}^{1} 
\frac{(|2(N-1){\rm log}(u)|^{1/2}+1)^N + 1}{\sqrt{u}} \, du =:I(N)\;.  
\end{aligned} \end{equation*} 
We deduce the existence of the constant  
$$\Lambda_0=\Lambda_0(\tau,N,R,r_0,\rho,\theta / \rho)=\Lambda_1  I(N)
$$   
such that for any $x \in \R^N$, $t \in [0,\tau]$ and $r \in [0,r_0]$,   
$$  
| \phi(x,t,r)-\phi(x,t,0)| \leq \Lambda_0 \, r.  
$$  
\ \hfill$\Box$

\section{Eikonal equation, interior cone property and perimeter estimates}\label{auxil}

\subsection{Some results on the classical eikonal equation}

In this section, we collect several properties of the eikonal
equation \eqref{chjb}.

We first recall the
\begin{theorem} [\cite{ley01}] \label{thm-borneinf} \

\begin{enumerate}
\item[(i)] Under assumption {\bf (H1)}, equation \eqref{chjb} 
has a unique continuous viscosity solution $u.$ If $u_0$ is Lipschitz
continuous, then $u$ is Lipschitz continuous and, 
for almost all $x\in \R^N,$ $t\in [0,T],$
$$
|Du(x,t)|\leq {\rm e}^{{C}T}|D u_0|_{\infty}\, , \ \  \ \quad  
|u_t (x,t)|\leq \oc{\rm e}^{{C}T}|D u_0|_{\infty}\; .
$$
\item[(ii)] Assume that $u_0$ is Lipschitz continuous 
and that {\bf (H1)} and {\bf (H2)} hold.
Then there exist 
$\gamma =\gamma ({C},\oc,\eta_0) >0, \eta =\eta ({C},\oc,\eta_0) >0$
such that the viscosity solution $u$ of \eqref{chjb}  
satisfies in the viscosity sense
\begin{eqnarray}\label{borneinf}
-|u(x,t)|-\frac{e^{\gamma t}}{4}|D u(x,t)|^2+\eta \leq 0 \ { in} \ \R^N\times [0,T] \; .
\end{eqnarray}
\end{enumerate}
\end{theorem}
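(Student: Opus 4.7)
The plan is to treat (i) by standard $L^1$-viscosity solution theory combined with an optimal-control representation, and (ii) by a comparison argument for an auxiliary function of $u$ and $|Du|^2$, the key technical issue being to handle $|Du|^2$ rigorously via doubling of variables.

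For (i), existence and uniqueness of a continuous viscosity solution to \eqref{chjb} under {\bf (H1)} (where $c$ is only measurable in $t$) is covered by the $L^1$-viscosity framework cited in the excerpt: the Hamiltonian $H(x,t,p)=-c(x,t)|p|$ is measurable in $t$, convex in $p$, bounded, and Lipschitz in $x$ uniformly in $t$, so Perron's method and the comparison principle apply. For the Lipschitz bound I would use the control representation
$$u(x,t)=\sup_{\|\alpha\|_\infty\le 1}\,u_0(y_{x,t}(0)),$$
where $\dot y(s)=-c(y(s),s)\alpha(s)$, $y(t)=x$, $|\alpha(s)|\le 1$. Gronwall applied to $|y_{x,t}(s)-y_{x',t}(s)|$, using the space-Lipschitz constant ${C}$ of $c$, gives $|y_{x,t}(0)-y_{x',t}(0)|\le e^{{C}t}|x-x'|$; taking the sup over the same admissible $\alpha$ then yields $|u(x,t)-u(x',t)|\le e^{{C}T}|Du_0|_\infty|x-x'|$. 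The bound on $u_t$ follows at once from $|u_t|=c|Du|\le \oc\, e^{{C}T}|Du_0|_\infty$ almost everywhere.

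For (ii), set $w(x,t):=-|u(x,t)|-\tfrac{e^{\gamma t}}{4}|Du(x,t)|^2+\eta$. First I verify $w(\cdot,0)\le 0$ by an appropriate choice of $\eta$: from {\bf (H2)}, $|u_0(x)|+|Du_0(x)|\ge \eta_0$ in the viscosity sense, so splitting into the cases $|u_0(x)|\ge\eta_0/2$ and $|Du_0(x)|\ge\eta_0/2$ and using $\tfrac14|Du_0|^2\ge\eta_0^2/16$ in the latter, the choice $\eta=\min(\eta_0/2,\eta_0^2/16)$ does the job. Next I propagate this inequality in time. Formally differentiating $u_t=c|Du|$ shows that $|Du|^2$ satisfies a first-order PDE whose transport term is $c\,Du/|Du|$ and whose source is bounded by $2{C}\oc|Du|^2$; combined with the subsolution inequality $(|u|)_t\le \oc|Du|$, this shows that if $\gamma$ is chosen large enough depending only on ${C}$ and $\oc$, then $w$ is (formally) a subsolution of a transport inequality whose unique solution with nonpositive initial data stays nonpositive.

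The main technical obstacle is that $|Du|^2$ is not a smooth function of $u$, so the formal computation above cannot be implemented directly with viscosity test functions. The standard remedy, which I would adapt here, is a doubling-of-variables argument: one replaces $|Du(x,t)|^2$ by the penalized surrogate
$$\Psi_\varepsilon(x,y,t):=-|u(x,t)|-\frac{e^{\gamma t}}{4}\left(\frac{u(x,t)-u(y,t)}{|x-y|}\right)^{\!2}+\eta-\frac{|x-y|^2}{\varepsilon^2},$$
applies the viscosity subsolution inequality for $u$ at the maximum of $\Psi_\varepsilon$ in $x$ and the supersolution inequality at the corresponding maximum in $y$, and then lets $\varepsilon\to 0$. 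The parameter $\gamma$ is tuned to absorb the cross terms coming from the Lipschitz and boundedness constants of $c$, exactly in the spirit of classical lower gradient-bound arguments. This yields the desired viscosity inequality \eqref{borneinf} with $\gamma$ and $\eta$ depending only on ${C}$, $\oc$, and $\eta_0$.
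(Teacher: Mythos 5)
You should first note that the paper does not prove Theorem \ref{thm-borneinf} at all: it is quoted from Ley \cite{ley01} ("We refer the reader to \cite{ley01} for the proof of this result"), so your proposal has to stand on its own as a reconstruction of that reference. For part (i) your outline is the standard one (control representation plus Gronwall in the space variable, then the equation for the time derivative), and it is essentially fine; the only points needing care are that under {\bf (H1)} the velocity is merely measurable in $t$, so identifying the value function of the Carath\'eodory dynamics with the unique $L^1$-viscosity solution of \eqref{chjb}, and deriving the time-Lipschitz bound from an equation that only holds in that weak sense, require the $L^1$-viscosity machinery (or an approximation/stability step) that you invoke only implicitly. These are minor.

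The genuine gap is in part (ii). Your verification at $t=0$ (with $\eta=\min(\eta_0/2,\eta_0^2/16)$) and the formal Bernstein computation ($|u|$ is constant along the optimal characteristics while $e^{2Ct}|Du|^2$ is nondecreasing along them, so a choice of $\gamma$ depending only on $C$ works) correctly identify why $\gamma,\eta$ depend only on $C,\oc,\eta_0$. But the step you offer to make this rigorous is not a workable argument as stated. In the doubled function $\Psi_\varepsilon$ the solution $u$ enters nonlinearly in both variables (through $|u(x,t)|$ and through the squared difference quotient), so at a maximum point you cannot "apply the subsolution inequality in $x$ and the supersolution inequality in $y$": the doubling-of-variables calculus requires the two copies of $u$ to appear linearly, each tested against a smooth function of $(x,y,t)$, and here it produces no usable viscosity inequalities. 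Moreover the penalization $-|x-y|^2/\varepsilon^2$ forces $x\to y$, where $(u(x,t)-u(y,t))/|x-y|$ is a $0/0$ quantity with no reason to converge to anything related to $|Du(x,t)|$; and in any case difference quotients bound the gradient from \emph{above}, whereas \eqref{borneinf} is a \emph{lower} gradient bound, so the mechanism by which $\gamma$ is supposed to "absorb the cross terms" is never exhibited. Turning the formal computation into a statement valid in the viscosity sense for a nonsmooth $u$ and a velocity only measurable in time is precisely the content of Ley's theorem (it exploits the convexity of $p\mapsto c(x,t)|p|$, e.g. through propagation of the estimate along optimal trajectories and their adjoints, together with approximation and stability arguments), and that core is missing from your sketch. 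You should also specify in which sense \eqref{borneinf} is meant (test functions touching from below or from above), since the architecture of any rigorous proof depends on that choice.
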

We refer the reader to \cite{ley01} for the proof of this result. 
Let us mention that {\bf (H1)} implies that $p\in\R^N \mapsto c(x,t)|p|$
is convex for every $(x,t)\in\R^N\times [0,T]$ which is a key
assumption to prove (ii).
We remark that, in (ii), $u$ is Lipschitz continuous
because the assumptions of (i) are satisfied. Therefore $u$ is differentiable a.e. in
$\R^N\times [0,T]$ and \eqref{borneinf} holds a.e. in $\R^N\times [0,T].$
Part (ii)  gives a lower-bound gradient estimate for $u$ near the front
$\{ (x,t)\in \R^N\times [0,T] : u(x,t)=0\}.$ Indeed, if $|u(x,t)|< \eta /2,$ then 
\begin{eqnarray} \label{bornepresfront}
-|D u(x,t)|\leq -\sqrt{2\eta} e^{-\gamma t/2}:= - \etab <0
\ {\rm in} \ \R^N\times [0,T] 
\end{eqnarray}
in the viscosity sense (and almost everywhere in $\R^N\times [0,T]$).

We continue by giving an upper-bound for the difference of 
two solutions with different velocities $c_i.$
\begin{lemma}[\cite{bl06}] \label{traj} 
For $i=1,2$, let $u_i \in {\mathcal C}^0(\R^N\times [0,T])$ be a solution of
\begin{eqnarray*}
\left\{
\begin{array}{cc}
(u_i)_t = c_i(x,t)|D u_i | & { in} \ \R^N\times [0,T],\\[2mm]
u_i (x,0)=u_0(x) & { in} \ \R^N,
\end{array}
\right.
\end{eqnarray*}
where $c_i$ satisfies {\bf (H1)} and $u_0$ is Lipschitz continuous. Then, for any $t\in [0,T],$
$$
 |(u_1-u_2)(\cdot ,t)|_\infty
\leq |D u_0|_\infty {\rm e}^{{C} t}  \int_0^t 
|(c_1-c_2)(\cdot ,s)|_\infty  ds.
$$
\end{lemma}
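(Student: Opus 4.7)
The plan is to exploit the optimal-control representation of the viscosity solution of the Eikonal equation with a strictly positive, Lipschitz velocity. Since each $c_i$ satisfies \textbf{(H1)}---so $0<\uc\le c_i\le \oc$ and $c_i$ is Lipschitz in $x$ uniformly in $t$---and the Hamiltonian $H_i(x,t,p)=-c_i(x,t)|p|$ is concave in $p$, the unique continuous viscosity solution $u_i$ of $(u_i)_t=c_i(x,t)|Du_i|$ with initial datum $u_0$ admits the reachable-set representation
\[
u_i(x,t) = \sup\bigl\{\, u_0(X(0)) \;:\; X\in W^{1,\infty}([0,t];\R^N),\ X(t)=x,\ |\dot X(s)|\le c_i(X(s),s)\ \text{a.e.}\,\bigr\}.
\]
I would first recall this formula (via the standard Bardi--Capuzzo-Dolcetta theory, possibly after regularizing $c_i$ in $t$ and passing to the limit thanks to the stability of viscosity solutions and the uniform Lipschitz bound from Theorem~\ref{thm-borneinf}(i)), and note that the supremum is attained owing to the compactness of the reachable set stemming from $c_i\le\oc$.

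Fix $x\in\R^N$, $t\in[0,T]$ and $\varepsilon>0$, and pick an admissible trajectory $X_1$ for $c_1$ with $X_1(t)=x$ and $u_0(X_1(0))\ge u_1(x,t)-\varepsilon$. The key step is to perturb $X_1$ into a trajectory $X_2$ admissible for $c_2$, ending at the same terminal point $x$ at time $t$, whose initial point is close to $X_1(0)$. I write $X_2(s)=X_1(s)+\xi(s)$ with $\xi(t)=0$ and prescribe
\[
\dot\xi(s) = \left(\frac{c_2(X_1(s)+\xi(s),s)}{c_1(X_1(s),s)}-1\right)\dot X_1(s).
\]
This is a backward Volterra equation whose right-hand side is Lipschitz in $\xi$ (because $c_1\ge\uc>0$ and $c_2$ is Lipschitz in $x$) and measurable in $s$, so Carath\'eodory theory delivers a unique absolutely continuous solution on $[0,t]$. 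By construction $\dot X_2(s)=\bigl(c_2(X_2(s),s)/c_1(X_1(s),s)\bigr)\,\dot X_1(s)$, whence $|\dot X_2(s)|\le c_2(X_2(s),s)$; so $X_2$ is admissible for $c_2$ and the representation formula gives $u_2(x,t)\ge u_0(X_2(0))$.

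Next I estimate $|\xi(0)|$: using $|\dot X_1(s)|\le c_1(X_1(s),s)$ together with the $x$-Lipschitz bound on $c_1$,
\[
|\dot\xi(s)| \le |c_1(X_1(s),s)-c_2(X_2(s),s)| \le C|\xi(s)| + |(c_1-c_2)(\cdot,s)|_\infty.
\]
Integrating backward from $\xi(t)=0$ and applying Gr\"onwall's inequality to $g(\sigma):=|\xi(t-\sigma)|$ yields
\[
|\xi(0)| \le e^{Ct}\int_0^t |(c_1-c_2)(\cdot,s)|_\infty\,ds.
\]
Since $u_0$ is Lipschitz, $u_1(x,t)-u_2(x,t)\le u_0(X_1(0))-u_0(X_2(0))+\varepsilon\le |Du_0|_\infty|\xi(0)|+\varepsilon$. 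Letting $\varepsilon\to 0$ and taking supremum over $x$ gives one direction of the claimed estimate; swapping the roles of $u_1$ and $u_2$ yields the reverse inequality, and the $L^\infty$ bound follows.

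The main obstacle I expect is the clean justification of the control representation formula for a velocity that is merely measurable in $t$, and the well-posedness of the implicit ODE for $\xi$ in that setting. Both points are handled by first regularizing $c_i$ in the time variable, applying the argument above to the regularized problems (which fall within classical Hamilton--Jacobi theory for continuous data), and then passing to the limit: the uniform Lipschitz control on $u_i$ from Theorem~\ref{thm-borneinf}(i) keeps $|Du_0|_\infty e^{Ct}$ as the prefactor, while stability of viscosity solutions preserves the estimate in the limit.
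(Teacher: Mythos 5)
Your argument is correct; note that the paper itself does not reprove Lemma \ref{traj} but quotes it from \cite{bl06}, so I compare your proof with the natural alternatives. Your route --- the reachable-set representation $u_i(x,t)=\sup\{u_0(X(0)):X(t)=x,\ |\dot X(s)|\le c_i(X(s),s)\}$, then the perturbation $X_2=X_1+\xi$ with $\dot\xi=\bigl(c_2(X_2,s)/c_1(X_1,s)-1\bigr)\dot X_1$ and a backward Gronwall estimate --- is sound: admissibility of $X_2$ for $c_2$ follows from $|\dot X_1|\le c_1(X_1,\cdot)$, the Carath\'eodory hypotheses for the $\xi$-equation hold because $c_1\ge\uc>0$ and $c_2$ is $C$-Lipschitz in $x$, and $|\xi(0)|\le e^{Ct}\int_0^t|(c_1-c_2)(\cdot,s)|_\infty\,ds$ together with the Lipschitz continuity of $u_0$ gives exactly the stated bound (attainment of the supremum is not needed since you work with $\varepsilon$-optimal trajectories); this is close in spirit to the trajectory/control arguments of \cite{bl06}. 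The one genuinely delicate point is the one you flag yourself: in the dislocation application $c_i$ is only measurable in $t$, so the representation formula must be justified in the $L^1$-viscosity framework; your plan of mollifying in $t$ only works, provided you note that this keeps {\bf (H1)} with the same constants $C,\uc,\oc$, that uniqueness and stability of $L^1$-viscosity solutions (the references quoted in Section \ref{sec:existence}) give $u_i^\e\to u_i$ locally uniformly, and that $\int_0^t|(c_1^\e-c_2^\e)(\cdot,s)|_\infty\,ds\to\int_0^t|(c_1-c_2)(\cdot,s)|_\infty\,ds$ because mollification in time does not increase the spatial sup-norm. For comparison, there is a shorter purely PDE proof that avoids control theory: by Theorem \ref{thm-borneinf}(i), $|Du_2(\cdot,s)|_\infty\le e^{Cs}|Du_0|_\infty$, hence $w(x,t)=u_2(x,t)+|Du_0|_\infty\int_0^t e^{Cs}|(c_1-c_2)(\cdot,s)|_\infty\,ds$ is a supersolution of the $c_1$-equation, and the comparison principle yields $u_1\le w$, i.e.\ the estimate after symmetrizing; that version trades your representation formula for the comparison principle for measurable-in-time Hamiltonians, while your version makes the origin of the factor $|Du_0|_\infty e^{Ct}$ completely explicit at the level of trajectories. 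Either way, the statement is established.
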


Finite speed of propagation implies
a uniform bound for compact fronts governed by
eikonal equations:

\begin{lemma} [\cite{bl06}] \label{fini-speed}
Suppose that {\bf (H1)} holds and that $u_0$ is Lispchitz continuous
and satisfies \eqref{cond-infinity}.
Let $u$ be the viscosity solution of \eqref{chjb} with initial
condition $u_0.$ Then, for all $t\in [0,T],$
\begin{eqnarray*} 
\{u(\cdot ,t)\geq 0\} \subset \bar{B}(0,R_0+\overline{c}t).
\end{eqnarray*}
\end{lemma}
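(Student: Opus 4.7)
The plan is a direct comparison argument: construct, for each $\epsilon>0$, an explicit radial supersolution $\phi_\epsilon$ of \eqref{chjb} whose initial trace dominates $u_0$ and which equals $-1$ outside $\bar B(0,R_0+\bar{c}\,t+\epsilon)$, then pass to the limit $\epsilon\to 0^+$.

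Set $M_0:=|u_0|_\infty$, which is finite since $u_0$ is Lipschitz and equal to $-1$ outside $\bar B(0,R_0)$. For each $\epsilon>0$, fix a smooth nonincreasing function $\psi_\epsilon:\R\to\R$ with $\psi_\epsilon\equiv M_0$ on $(-\infty,R_0]$ and $\psi_\epsilon\equiv -1$ on $[R_0+\epsilon,+\infty)$, obtained by mollifying the obvious piecewise-affine interpolation. Define
\[
\phi_\epsilon(x,t):=\psi_\epsilon\bigl(|x|-\bar{c}\,t\bigr).
\]
Note that $\phi_\epsilon\in C^1(\R^N\times[0,T])$: for $t\geq 0$ near the singular set $\{x=0\}$, the argument $|x|-\bar{c}\,t$ lies in $(-\infty,R_0]$, where $\psi_\epsilon$ is constant, so the lack of smoothness of $|\cdot|$ at the origin is harmless. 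A direct differentiation yields $(\phi_\epsilon)_t=-\bar{c}\,\psi_\epsilon'$ and $|D\phi_\epsilon|=-\psi_\epsilon'$ everywhere, hence
\[
(\phi_\epsilon)_t - c(x,t)\,|D\phi_\epsilon| \;=\; \bigl(c(x,t)-\bar{c}\bigr)\psi_\epsilon' \;\geq\; 0
\]
by $c\leq \bar{c}$ in {\bf (H1)} and $\psi_\epsilon'\leq 0$. Thus $\phi_\epsilon$ is a classical (in particular, viscosity) supersolution of \eqref{chjb} on $\R^N\times(0,T)$.

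By construction $\phi_\epsilon(x,0)=\psi_\epsilon(|x|)\geq M_0\geq u_0(x)$ for $|x|\leq R_0$, and $\phi_\epsilon(x,0)\geq -1=u_0(x)$ for $|x|>R_0$ by \eqref{cond-infinity}, so $\phi_\epsilon(\cdot,0)\geq u_0$ on $\R^N$. The viscosity solution $u$ is bounded and continuous by Theorem~\ref{thm-borneinf}(i), so the standard comparison principle for \eqref{chjb} under {\bf (H1)} gives $u\leq \phi_\epsilon$ on $\R^N\times[0,T]$. Whenever $|x|>R_0+\bar{c}\,t+\epsilon$ one has $|x|-\bar{c}\,t>R_0+\epsilon$, whence $\phi_\epsilon(x,t)=-1$ and consequently $u(x,t)\leq -1<0$. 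Letting $\epsilon\to 0^+$, any $x$ with $|x|>R_0+\bar{c}\,t$ can be caught by some admissible $\epsilon$, so $u(x,t)<0$ there. This is exactly the inclusion $\{u(\cdot,t)\geq 0\}\subset\bar B(0,R_0+\bar{c}\,t)$.

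The argument is essentially routine; the only point worth some care is ensuring that $\phi_\epsilon$ is honestly $C^1$ so that no viscosity-solution gymnastics at kinks or at the origin needs to be invoked—this is handled cleanly by mollifying the natural piecewise-affine profile into a smooth nonincreasing $\psi_\epsilon$, which also shields the computation at $x=0$ because $\phi_\epsilon$ is locally constant there.
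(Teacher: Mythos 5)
Your proof is correct. The paper does not prove this lemma itself --- it defers to \cite{bl06} --- and the argument there is the same standard one you give: comparison of $u$ with a barrier moving at the maximal speed $\oc$, e.g.\ with the Lax--Oleinik solution $w(x,t)=\sup_{|y-x|\le \oc t}u_0(y)$ of $w_t=\oc|Dw|$, of which your mollified radial profile $\psi_\epsilon(|x|-\oc t)$ is an explicit smooth variant. The only point worth noting is that under {\bf (H1)} the velocity is merely measurable in $t$, so the comparison principle must be invoked in the $L^1$-viscosity framework of \cite{ishii85, nunziante90, nunziante92, bourgoing04a, bourgoing04b}; this is harmless here because your $\phi_\epsilon$ is $C^1$ and the supersolution inequality holds pointwise for every $t$, so it is a supersolution in that sense as well.
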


\begin{lemma}[\cite{bl06}] (viscosity increase principle) 
\label{visc-increas}
Let $w\in {\mathcal C}^0(\R^N)$ satisfying {\bf (H2)} and $\delta <\eta_0/2.$
If $x\in \{-\delta \leq w\leq \delta\},$ then
\begin{eqnarray*} 
\mathop{\rm sup}_{\bar{B}(x,2\delta/\eta_0)} w \geq w(x)+\delta.
\end{eqnarray*}
\end{lemma}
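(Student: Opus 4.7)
The plan is a contradiction argument driven by the viscosity subsolution inequality encoded in \textbf{(H2)}. Suppose, for contradiction, that
\[
w(y) < w(x) + \delta \quad \text{for every } y \in \bar{B}\bigl(x,\,2\delta/\eta_0\bigr).
\]
The interpretation of \textbf{(H2)} is that at any smooth $\phi$ with $w-\phi$ attaining a local maximum at $y_0$, one has $|D\phi(y_0)| \geq \eta_0 - |w(y_0)|$. In particular, on the ``front region'' $\{|w|\leq\delta\}$ this forces any admissible upper test function to have gradient of modulus at least $\eta_0-\delta > \eta_0/2$, so a cone of slope $\eta_0/2$ cannot dominate $w$ over a ball of radius $2\delta/\eta_0$. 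This is the heuristic to be turned into a rigorous contradiction.

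Set $r:=2\delta/\eta_0$ and introduce the penalized function
\[
\Psi(y) := w(y) - k\,|y-x|
\]
on $\bar{B}(x,r)$, with $k$ chosen just larger than $\eta_0/2$ so that $kr>\delta$. The contradiction hypothesis yields $\Psi(y) < w(x)+\delta-kr < w(x)=\Psi(x)$ for all $y\in\partial B(x,r)$, so every maximizer $y_0$ of $\Psi$ lies in the open ball $B(x,r)$. On the ``good'' alternative $y_0\neq x$, the test function $\phi(y):=k|y-x|$ is $C^1$ near $y_0$ with $|D\phi(y_0)|=k$, and the viscosity subsolution property of \textbf{(H2)} at $y_0$ gives $k\geq \eta_0-|w(y_0)|$. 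By maximality $w(y_0)\geq w(x)+k|y_0-x|\geq w(x)\geq -\delta$, and by the contradiction hypothesis $w(y_0)<w(x)+\delta\leq 2\delta$; a sign-sensitive bookkeeping of $w(y_0)$ (sharpest when $w(x)\le 0$, where one actually gets $|w(y_0)|\leq\delta$) then contradicts $k$ being close to $\eta_0/2$, provided $\delta<\eta_0/2$.

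The two obstacles to anticipate are: (i) the non-smoothness of $|y-x|$ at $x$, so if the maximizer happens to be $y_0=x$ the test function above is not admissible; I would break the symmetry by replacing $\Psi$ with $\Psi(y)-\mu\cdot(y-x)$ for a small perturbation vector $\mu$, or regularize the cone as $k\sqrt{|y-x|^2+\tau^2}$ and pass to the limit $\tau\to 0^+$, noting that at $x$ the regularized test function has gradient $0$ while \textbf{(H2)} demands $|D\phi(x)|\geq \eta_0-|w(x)|\geq\eta_0-\delta>0$, ruling out $y_0=x$ after regularization; and (ii) getting the constants to close exactly at the sharp threshold $\delta<\eta_0/2$: a naive one-shot estimate bounds $|w(y_0)|$ by $2\delta$ and only yields $\delta<\eta_0/4$, so reaching the sharp $\eta_0/2$ requires optimizing $k$ inside the window $(\eta_0/2,\,\eta_0-\delta)$ (available precisely because $\delta<\eta_0/2$) together with the refined sign analysis above. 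This is the main technical hurdle, and it is exactly the place where the argument of \cite{bl06} must be consulted for the detailed bookkeeping.
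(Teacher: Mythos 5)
Your mechanism is the natural one (contradiction, the cone $\Psi(y)=w(y)-k|y-x|$ with slope $k$ just above $\eta_0/2$, the subsolution reading of \textbf{(H2)} at an interior maximum, and smoothing the vertex by $k\sqrt{|y-x|^2+\tau^2}$), and since the paper gives no proof of this lemma (it refers to \cite{bl06}) that is the right family of argument to attempt. But the proposal is not a proof: the decisive quantitative step is exactly the one you defer to \cite{bl06}, and it cannot be filled in along the lines you sketch. At the touching point $y_0$ the only available information is $-\delta\le w(x)\le w(y_0)<w(x)+\delta\le 2\delta$, so to contradict \textbf{(H2)} you need $k\le\eta_0-|w(y_0)|$ with $|w(y_0)|$ possibly close to $2\delta$, while the cone must satisfy $k>\eta_0/2$; these are compatible only if $\delta<\eta_0/4$, as you note. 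The ``refined sign analysis'' helps only when $w(x)\le 0$; when $w(x)$ is near $+\delta$ nothing can help, because the statement with the degenerate bound $\eta_0-|w|$ and these constants is simply out of reach: the smooth function $w(y)=\eta_0-(\eta_0-\delta)\,e^{-(y-x)\cdot e}$ ($e$ a unit vector) satisfies $|Dw|\ge\eta_0-|w|$ classically on all of $\R^N$ (with equality where $w\ge0$), has $w(x)=\delta$, and yet $\sup_{\bar{B}(x,2\delta/\eta_0)}w=\eta_0-(\eta_0-\delta)e^{-2\delta/\eta_0}<w(x)+\delta$ once $\delta$ is close to $\eta_0/2$. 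So no bookkeeping or optimization of $k$ inside $(\eta_0/2,\eta_0-\delta)$ can close the argument from the literal hypothesis.

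What actually makes the lemma work, and the way it is invoked in Step 4 of the proof of Theorem \ref{uniqueness-fn}, is a \emph{non-degenerate} gradient bound on the strip of values swept by the argument: there one applies it with $\eta_0$ replaced by the constant $\bar{\eta}$ of \eqref{bornepresfront}, i.e. $|Dw|\ge\bar{\eta}$ in the viscosity (subsolution) sense on $\{|w|<\eta/2\}$, which contains all the values $w(y_\epsilon)\in[w(x)-k\epsilon,\,w(x)+\delta)$ appearing in your computation. With that uniform bound your own cone argument closes immediately and with room to spare: at the (smoothed) touching point the viscosity inequality gives $\eta_0\le|D\phi_\tau(y_\tau)|\le k$, contradicting $k\in(\eta_0/2,\eta_0)$, with no case analysis and for any radius larger than $\delta/\eta_0$. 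So the missing idea is not sharper arithmetic but the replacement of the degenerate inequality $|Dw|\ge\eta_0-|w|$ by the uniform lower bound valid near the relevant level sets (this is precisely how \textbf{(H2)} is converted, via Theorem \ref{thm-borneinf} and \eqref{bornepresfront}, before the lemma is used); once that substitution is made, your argument is complete, and without it the claimed threshold $\delta<\eta_0/2$ with radius $2\delta/\eta_0$ is unattainable.
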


We refer the reader to \cite{bl06} for the proofs of these results.

\subsection{Estimates on the measure of level-sets for
solutions of  \eqref{chjb}.}

Now we turn to the key estimates on the measure of small level-sets of
the solution of the Eikonal equation \eqref{chjb}.
For every $-\eta/2 \leq a <b \leq \eta/2$, we consider the function 
$\varphi : \R\to \R^+$ , depending on $a$ and $b$ such that
$\varphi =0$ on $(-\infty ,a),$ $\varphi ' (t) = (b-a)^{-1}$ in $(a,b)$ and
$\varphi =1$ on $[b , +\infty).$ In fact, $\varphi$ is chosen in such a 
way that $(b-a) \varphi'$ is the indicator function of $[a,b].$
We omit to write the dependence of $\varphi$ with respect to 
$a,b$ for the sake of simplicity of notations.

\begin{proposition} \label{est-int}
Assume {\bf (H1)}, {\bf (H2)} and suppose that $\{ u_0\geq 0\}$ 
is a compact subset of $\R^N$. Let $-{\eta}/2 \leq a <b \leq {\eta}/2$ 
where ${\eta}$ is defined in \eqref{borneinf} and 
let $u$ be the unique Lipschitz continuous viscosity solution of \eqref{chjb}.
Then, for any $0 < \tau \leq T$
\begin{equation}\label{est-base}
\int_0^\tau \int_{\R^N}\, \car_{\{a \leq u \leq b\}} dxdt \leq \frac{b-a}{\etab \uc}\int_{\R^N}\,
\left[ \varphi(u(x,\tau)) - \varphi(u(x,0)) \right]dx,
\end{equation}
where $\bar{\eta}$ is defined in \eqref{bornepresfront}.
It follows
\begin{eqnarray}\label{est-base1}
 \int_0^\tau \int_{\R^N}\, \!\! \car_{\{a \leq u \leq b\}} dxdt
\leq 
\frac{b\!-\!a}{\etab \uc} \left[{\mathcal L}^N\!\left(\{u(\cdot,\tau)\geq a\}\right) 
- {\mathcal L}^N\!\left(\{u(\cdot,0)\geq b\}\right)\right]\! dx,
\end{eqnarray}
and
\begin{eqnarray}\label{est-base2}
&& \int_0^\tau \int_{\R^N}\, \car_{\{a \leq u \leq b\}} dxdt \\
&& \hspace*{1.5cm} \leq  \frac{b-a}{\etab \uc} \left[
{\mathcal L}^N\left(\{u(\cdot,0)\geq a-\oc|Du_0|_\infty
\tau\}\right) 
- {\mathcal L}^N\left(\{u(\cdot,0)\geq b\}\right) \right]. \nonumber
\end{eqnarray}
\end{proposition}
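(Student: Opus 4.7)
The plan is to exploit the identity $(b-a)\varphi'(s)=\car_{[a,b]}(s)$ together with the equation $u_t=c(x,t)|Du|$ to convert the space--time volume of the band $\{a\le u\le b\}$ into the boundary term $\int_{\R^N}[\varphi(u(\cdot,\tau))-\varphi(u(\cdot,0))]\,dx$ appearing in \eqref{est-base}. The quantitative factor $(b-a)/(\uc\etab)$ will come from the fact that $c\ge\uc$ on $\R^N\times[0,T]$ and that, by {\bf (H2)} and the lower-bound gradient estimate \eqref{bornepresfront}, one has $|Du|\ge\etab$ a.e.\ on $\{|u|\le\eta/2\}\supset\{a\le u\le b\}$; together these give $u_t=c|Du|\ge\uc\etab$ a.e.\ on the band.

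To derive \eqref{est-base}, I would fix $x$ in a set of full Lebesgue measure where $t\mapsto u(x,t)$ is Lipschitz (using Theorem~\ref{thm-borneinf}(i)) and where $u_t(x,t)=c(x,t)|Du(x,t)|$ holds at a.e.\ $t$. The composition $t\mapsto\varphi(u(x,t))$ is Lipschitz, hence absolutely continuous, with $\frac{d}{dt}\varphi(u(x,t))=\varphi'(u(x,t))\,u_t(x,t)$ at a.e.\ $t$. Combining $\car_{[a,b]}(u)=(b-a)\varphi'(u)$ a.e.\ with the lower bound on $u_t$ on the band gives, after the fundamental theorem of calculus in $t$,
\begin{equation*}
\int_0^\tau\car_{[a,b]}(u(x,t))\,dt=(b-a)\int_0^\tau\varphi'(u(x,t))\,dt\le\frac{b-a}{\uc\etab}\bigl[\varphi(u(x,\tau))-\varphi(u(x,0))\bigr],
\end{equation*}
and Fubini in $x$ yields \eqref{est-base}. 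The one technical point is the chain rule at the corners $s=a,b$ of $\varphi$; I would handle it by approximating $\varphi$ by $C^1$ ramps $\varphi_k$ with $0\le\varphi_k'\le(b-a)^{-1}$ and $\varphi_k'\to\varphi'$ pointwise, applying the above for each $\varphi_k$ (where the chain rule is immediate), and passing to the limit via dominated convergence (the bound $|\varphi_k|\le 1$ together with Lemma~\ref{fini-speed}, which gives compact support in $x$ for $\{u(\cdot,t)\ge a\}$, ensures integrability).

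The remaining estimates are elementary consequences. The pointwise inequalities $\car_{\{s\ge b\}}\le\varphi(s)\le\car_{\{s\ge a\}}$ inserted in \eqref{est-base} give \eqref{est-base1} at once. For \eqref{est-base2} it then suffices to show $u(x,\tau)\le u_0(x)+\oc|Du_0|_\infty\tau$, so that $\{u(\cdot,\tau)\ge a\}\subset\{u_0\ge a-\oc|Du_0|_\infty\tau\}$; this follows from comparison with the classical supersolution $w(x,t):=u_0(x)+\oc|Du_0|_\infty t$ of \eqref{chjb}, which satisfies $w_t=\oc|Du_0|_\infty\ge c(x,t)|Du_0(x)|=c(x,t)|Dw|$ a.e.\ (the viscosity supersolution property is recovered by mollifying $u_0$ if needed and passing to the limit), combined with the standard comparison principle for the eikonal equation under {\bf (H1)}. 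The main potential obstacle throughout is the chain-rule step at the corners of $\varphi$, but the smoothing approximation above makes it routine.
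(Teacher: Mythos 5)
Your proposal is correct and follows essentially the same route as the paper: rewrite $\car_{\{a\le u\le b\}}=(b-a)\varphi'(u)$, bound it using $c\ge\uc$ and the lower gradient bound $|Du|\ge\etab$ on the band, use the equation and the chain rule to recognize $(\varphi(u))_t$, integrate in $t$ and apply Fubini, then deduce \eqref{est-base1} from $\car_{\{s\ge b\}}\le\varphi(s)\le\car_{\{s\ge a\}}$ and \eqref{est-base2} by comparison with the supersolution $u_0+\oc|Du_0|_\infty t$. Your extra care with the chain rule at the corners of $\varphi$ (smoothing and passing to the limit) only makes explicit a point the paper leaves implicit.
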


\begin{remark} The above Proposition is related with results obtained by the fourth author in \cite{monteillet07}
for the eikonal equation with a changing sign velocity.
\end{remark}

\begin{proof}[Proof of Proposition~\ref{est-int}]
By the definition of $\varphi$
$$
\int_0^\tau \int_{\R^N}\, \car_{\{a \leq u \leq b\}} dxdt 
= \int_0^\tau \int_{\R^N}\, (b-a)\varphi'(u (x,t)) dxdt\; .
$$
Using the fact that $-{\eta}/2 \leq a <b \leq {\eta}/2$ 
and the definition of $\bar{\eta}$ in \eqref{bornepresfront}, 
we can estimate the right-hand side by
$$ 
 \int_0^\tau \int_{\R^N}\, (b-a)\varphi'(u (x,t))
\frac{c(x,t)}{\uc}\frac{|Du|}{\etab} dxdt \; ,
$$
since $\uc \leq c$ on $\R^N \times (0,T)$ and $|Du| \geq \etab$ 
on the set $\{|u|\leq \eta/2\}$. Therefore, by the equation, we have the following equality
$$
\frac{b-a}{\uc\etab}
\int_0^\tau \int_{\R^N}\, \varphi'(u (x,t))c(x,t)|Du| dxdt 
= \frac{(b-a)}{\uc\etab}\int_0^\tau \int_{\R^N}\, \left(\varphi(u
(x,t))\right)_t dxdt \; ,
$$
and \eqref{est-base} follows by applying Fubini's Theorem and  integrating.
Inequality \eqref{est-base1}  follows easily by 
taking into account the form of $\varphi.$
To deduce \eqref{est-base2}, it is sufficient to note that, 
since $u_0+ \oc|Du_0|_\infty t$ 
is a supersolution of \eqref{chjb}, we have, by comparison, 
$u(x,t) \leq u_0 (x) + \oc|Du_0|_\infty t$
in $\R^N \times (0,T)$.
\end{proof}

\subsection{Estimate of the perimeter of sets with the interior cone property}  

\begin{definition} \label{def-int-cone-prop}
Let $K$ be a compact subset of $\R^N$. We say that $K$ has the interior cone property 
of parameters $\rho$ and $\theta$ if $0<\rho < \theta$ and if, for any   
$x \in \partial K$, there exists some  $\nu \in \Sc^{N-1}$ such that the set
\begin{eqnarray*} 
\begin{array}{ccl}
 {\mathcal C}_{\nu,x}^{\rho,\theta} & \!\! := & \!\! x +
     [0,\theta]\bar{B}(\nu,\rho / \theta) \\
 & \!\! = &\!\! \{x+\lambda\nu+\lambda\frac{\rho}{\theta}\xi\, : \, 
\lambda\in [0,\theta], \, \xi\in\bar{B}(0,1)\}  
\end{array}
\end{eqnarray*}
is contained in $K$.
\end{definition}  

\begin{theorem} \label{interior-cone}  
Let $K$ be a compact subset of $\R^N$ having the interior cone property of parameters 
$\rho$ and $\theta$.  
Then there exists a positive constant ${\Lambda}=\Lambda(N,\rho,\theta/\rho)$ such that for all $R >0$,  
\begin{eqnarray}\label{est-perim} 
\Hc^{N-1}(\partial K \cap \bar{B}(0,R)) \leq \Lambda\, \Lc^{N}(K \cap \bar{B}(0,R+\rho /4)).  
\end{eqnarray} 
\end{theorem}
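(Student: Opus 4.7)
The strategy I would pursue is a covering argument converting the interior cone hypothesis into a perimeter-to-volume comparison. I would cover $\partial K\cap\bar B(0,R)$ by small balls centred at boundary points with bounded overlap, and exploit the cone at each centre twice: first to produce a disjoint piece of $K$ of controlled volume lying in $\bar B(0,R+\rho/4)$, giving an upper bound on the number $M$ of balls; second to obtain a local Lipschitz-graph representation of $\partial K$, giving a uniform upper bound on the surface area inside each ball.

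Concretely, I would fix a maximal $(\rho/2)$-separated family $\{x_1,\dots,x_M\}\subset\partial K\cap\bar B(0,R)$, so that the balls $\bar B(x_i,\rho/4)$ are pairwise disjoint while $\partial K\cap\bar B(0,R)\subset\bigcup_i\bar B(x_i,\rho/2)$. For each $i$ I would pick $\nu_i\in\Sc^{N-1}$ with $\mathcal C_{\nu_i,x_i}^{\rho,\theta}\subset K$ and truncate at height $s_0:=\rho\theta/[4(\theta+\rho)]$; the truncated cone $\mathcal C_i':=x_i+[0,s_0]\bar B(\nu_i,\rho/\theta)$ sits inside $\bar B(x_i,\rho/4)\subset\bar B(0,R+\rho/4)$, since any of its points is at distance at most $s_0(1+\rho/\theta)=\rho/4$ from $x_i$. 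A direct integration along the axis gives $\Lc^N(\mathcal C_i')=V_0$ with an explicit $V_0=V_0(N,\rho,\theta/\rho)>0$, and the $\mathcal C_i'$ are pairwise disjoint; hence $MV_0\le\Lc^N(K\cap\bar B(0,R+\rho/4))$. Next, the uniform interior cone property classically forces $\partial K\cap\bar B(x_i,\rho/2)$ to be the graph over $\nu_i^\perp$ of a function Lipschitz with constant of order $\theta/\rho$, so that $\Hc^{N-1}(\partial K\cap\bar B(x_i,\rho/2))\le P_0$ for an explicit $P_0=P_0(N,\rho,\theta/\rho)$. Summing over $i$ then yields
\[
\Hc^{N-1}(\partial K\cap\bar B(0,R))\le M\,P_0\le\frac{P_0}{V_0}\,\Lc^N(K\cap\bar B(0,R+\rho/4)),
\]
which is the desired inequality with $\Lambda=P_0/V_0$, a function only of $N$, $\rho$ and $\theta/\rho$.

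The main obstacle will be the local Lipschitz-graph representation, because the cone axis $\nu_y$ attached to an arbitrary boundary point $y$ near $x_i$ need not be close to $\nu_i$. The geometric key is that if $y\in\partial K\cap B(x_i,\rho/2)$ then $y-x_i$ cannot point into the open direction cone $\{\nu_i+(\rho/\theta)\xi:|\xi|<1\}$ (otherwise $\mathcal C_{\nu_i,x_i}^{\rho,\theta}$ would place $y$ in the interior of $K$), and symmetrically $x_i-y$ cannot point into the analogous cone attached to $\nu_y$. Combining these two exclusions along lines in direction $\nu_i$ inside $\bar B(x_i,\rho/2)$ precludes $\partial K$ from doubling back on itself in that direction; this is the classical step (cf.\ Chenais, Delfour--Zol\'esio) for writing the local piece of $\partial K$ as a Lipschitz graph over $\nu_i^\perp$ with constant controlled by $\theta/\rho$, from which the pointwise perimeter bound $P_0$ follows.
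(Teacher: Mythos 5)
Your first half (a maximal $\rho/2$-separated family of boundary points, pairwise disjoint truncated cones of volume $V_0$ inside $K\cap\bar{B}(0,R+\rho/4)$, hence a bound on the number $M$ of balls) is sound and is essentially the paper's counting step (done there with a Besicovitch covering rather than a separated net). The gap is in the second half: the claim that $\partial K\cap\bar{B}(x_i,\rho/2)$ is a Lipschitz graph over $\nu_i^{\perp}$ with constant of order $\theta/\rho$ is false in general, and your ``symmetric exclusion'' argument does not repair it. The two exclusions you invoke are correct ($y-x_i$ avoids the open cone of directions at $x_i$, and $x_i-y$ avoids the open cone at $y$), but the second one involves the axis $\nu_y$ attached to $y$, which the interior cone property allows to be completely unrelated to $\nu_i$; so nothing prevents a line parallel to $\nu_i$ from meeting $\partial K$ several times inside $\bar{B}(x_i,\rho/2)$. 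A concrete obstruction: take $K$ a comb-like set made of parallel slabs of thickness comparable to $\rho$ joined at one end, with gaps smaller than $\rho/2$. Every boundary point has an interior cone of uniform parameters pointing along its slab, yet a ball $\bar{B}(x_i,\rho/2)$ centered on the top face of one slab contains two (or several, up to order $\theta/\rho$) parallel boundary sheets, and this set is not a graph over $\nu_i^{\perp}$ (nor over any hyperplane). The uniform-cone-property results you cite (Chenais, Delfour--Zol\'esio) assume the cone direction can be chosen locally constant on the boundary, which is a strictly stronger hypothesis than the pointwise one used here; with pointwise, arbitrarily varying axes the local single-graph representation simply fails.

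What is missing is the paper's key decomposition by cone direction. One first covers $\Sc^{N-1}$ by roughly $(\theta/\rho)^{N-1}$ caps of radius $\rho/(2\theta)$, so that every boundary point admits a cone of parameters $\rho/2,\theta$ with axis among finitely many fixed directions $\nu_1,\dots,\nu_p$; one then works with $A_i=\{z\in\partial K:\ {\mathcal C}_{\nu_i,z}^{\rho/2,\theta}\subset K\}$ separately. For a \emph{fixed} axis, $A_i$ intersected with a small cylinder is contained in the boundary of the union of the translated common-axis cones, which is a supremum of cone graphs and hence a genuine Lipschitz graph with constant $\sqrt{(2\theta/\rho)^2-1}$; this gives the uniform local bound on $\Hc^{N-1}(A_i\cap V)$ that plays the role of your $P_0$, with multiplicity one by construction. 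Summing over the finitely many directions produces the extra factor $p\sim(\theta/\rho)^{N-1}$ in $\Lambda$. Your volume/counting step can be kept essentially as is; but without the restriction to a common axis, the perimeter-per-ball bound you assert is unproved (indeed it is equivalent in strength to the theorem itself on small balls), so the proposal as written does not close.
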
  

\begin{figure}[ht]  
\begin{center}  
\epsfig{file=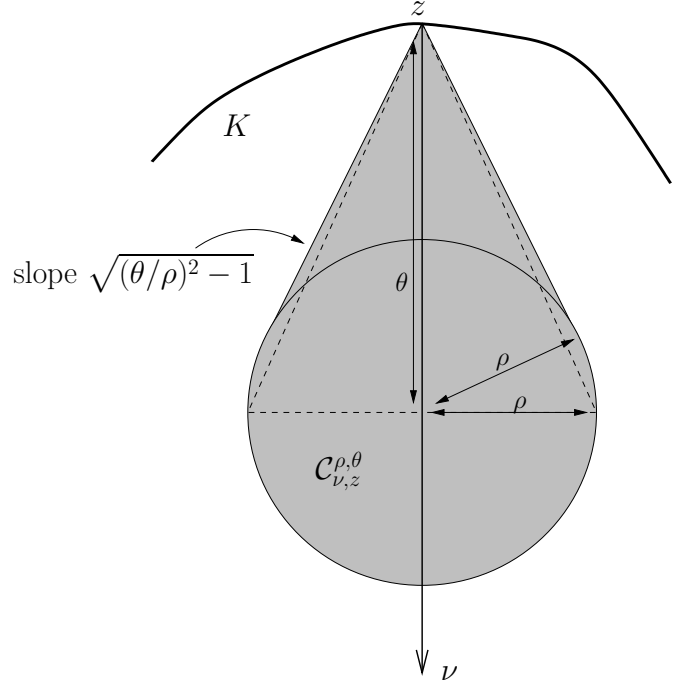, width=9cm}   
\end{center}  
\caption{\label{des-def-cone}  
{\it ${\mathcal C}_{\nu, z}^{\rho,\theta}$: interior cone at $z$ of parameters 
  $\rho,\theta$ and axis $\nu.$}}  
\end{figure}  
  
\begin{figure}[ht]  
\begin{center}  
\epsfig{file=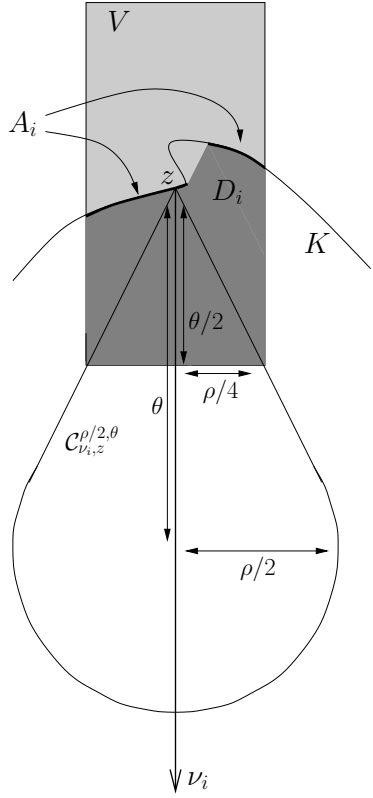, width=5cm}   
\end{center}  
\caption{\label{dess-preuve-i}  
{\it Illustration of the proof of Theorem \ref{interior-cone}.}}  
\end{figure}  

\begin{proof} \ \\ 
 \noindent \textit{1. Restriction to a finite number of axes for the interior  cones.}  
We first observe that if $z \in \partial K$ and ${\mathcal C}_{\nu,z}^{\rho,\theta} \subset K$, then for all $\nu' \in \Sc^{N-1}$  
verifying $|\nu - \nu'| \leq \rho / (2 \theta)$, we have ${\mathcal C}_{\nu',z}^{\rho /  2,\theta} \subset K$. By compactness of  
$\Sc^{N-1}$, we can cover $\Sc^{N-1}$ with the traces on $\Sc^{N-1}$ of at most  
$p:={\b(N)}/{(\rho / (2 \theta))^{N-1}}$ balls of radius $\rho / (2 \theta)$ 
centered at $\nu_i$, for some positive constant $\b(N)$ and $1 \leq i \leq p$. Therefore,  
for any $z \in \partial K$, there exists $1\leq i \leq p$ such that ${\mathcal C}_{\nu_i,z}^{\rho / 2,\theta} \subset K$. \\ 
  
\noindent \textit{2. Local study of points of the boundary with the same interior cone axis.}  
We fix $1\leq i \leq p$ and set $A_i =\{ z \in \partial K; \; {\mathcal C}_{\nu_i,z}^{\rho / 2,\theta} \subset K \}$.  
Up to a rotation of $K$, we can assume that $\nu_i=(0,\dots,0,-1)=:\nu$.  
Let us fix $z \in A_i$, that we write $z=(x,y)$ with $x \in \R^{N-1}$ and $y \in \R$.  
Let us set $V=B_{N-1}(x,\rho / 4) \times \left( y-{\theta}/{2}, y+{\theta}/{2} \right)$ and  
$$  
D_i = \overline{V} \cap \underset{(x',y') \in A_i \cap \overline{V}}{\bigcup} {\mathcal C}_{\nu_i ,(x',y')}^{\rho / 2,\theta}.  
$$  
Then $A_i \cap V \subset \partial D_i \cap V$: indeed if $(x',y') \in A_i \cap V$, then $(x',y') \in D_i \cap V$,  
and $(x',y')$ can not lie in the interior of $D_i$, otherwise for $\l >0$ small enough,  
we would have $(x',y') -\l \nu \in D_i$, which would imply that $(x',y')$ lies in 
the interior of one of the cones forming $D_i$, and therefore in the interior of 
$K$, which is absurd since $(x',y') \in \partial K$. \\ 
  
\noindent \textit{3. The set $\partial D_i \cap V$ is a Lipschitz graph of  constant  
$\sqrt{(2\theta / \rho)^2-1}.$}  
More precisely let us prove that $\partial D_i \cap V$ is equal to  
\begin{eqnarray*}  
G_i & =& \big\{ (x',y') : x' \in B_{N-1}(x,\rho / 4) \\  
&& \hspace*{0.4cm}  
\text{and} \; y'=\max \{ y'': \; (x',y'')\in \partial {\mathcal C} \; \text{for one of 
  the cones} \;  {\mathcal C} \; \text{forming} \; D_i \} 
 \big\}.  
\end{eqnarray*}   
First of all, it is easy to show that $D_i$ is closed, and that the maximum in the definition of $G_i$ exists and is  
not equal to $y+\frac{\theta}{2}$; otherwise there would exist a cone ${\mathcal C}$ in 
$D_i$ such that $(x,y) \in {\rm int}({\mathcal C}) \subset {\rm int}(K)$, which is absurd. The  
inclusion $G_i \subset \partial D_i \cap V$ follows from the same argument used for the inclusion  
$A_i \cap V \subset \partial D_i \cap V$ in Step 2. Conversely, let us fix $(x',y') \in \partial D_i \cap V$.  
Then $(x',y') \in D_i$ since $D_i$ is closed, so that $(x',y')$ is included in the trace on $V$ of one of the  
cones forming $D$, let us say $(x',y') \in {\mathcal C}$. But then $(x',y')$ can not belong to ${\rm int}({\mathcal C})$, otherwise we would  
have $(x',y') \in {\rm int}(D_i)$, so we deduce that $(x',y') \in \partial {\mathcal C} \cap V$. Moreover  
if there exists $y'' >y'$  
such that $(x',y'')\in \partial {\mathcal C}'$  for some other of the cones ${\mathcal C}'$ forming $D_i$, then we must have $(x',y') \in 
{\rm int}({\mathcal C}') \cap V \subset {\rm int}(D_i)$, which is absurd, and proves that 
$y'$ is equal to the maximum in the definition of $G_i$, and that $\partial D_i \cap V \subset G_i$.  
Therefore $\partial D_i \cap V$ is a Lipschitz graph of constant $\mu= \sqrt{(2\theta / \rho)^2-1}$ as a supremum of  
graphs of cones of same parameters $\rho$ and $\theta$. \\ 
 
\noindent \textit{4. Estimate of the perimeter of $A_i$ in $V.$}  
It follows from Step 3 that $\partial D \cap V$ is $\Hc^{N-1}$ measurable with   
$$  
\Hc^{N-1}(\partial D \cap V) \leq \Lc^{N-1}(B_{N-1}(x,\rho / 4)) \, \sqrt{1+\mu^2},  
$$  
hence  
\begin{eqnarray*} 
\Hc^{N-1}(A_i \cap V) \leq \omega_{N-1} \left(\frac{\rho}{4}\right)^{N-1} \, \frac{2\theta}{\rho},  
\end{eqnarray*} 
where $\omega_j$ denotes the volume of the unit ball of $\R^j$. \\ 
  
\noindent \textit{5. Covering of $A_i$ with balls of fixed radius.} By Besicovitch's covering theorem 
(see \cite{eg92}), there exists a constant $\xi_N$ depending only on $N$ such that for any $\e >0$ and  
$R>0$, there exist numbers $\Gamma_1,\dots,\Gamma_{\xi_N}$ and a finite family $(x_{kj})$  
(for $1\leq k \leq \xi_N$ and $1 \leq j \leq \Gamma_k$) of points of $A_i \cap \bar{B}(0,R)$ such that   
$$  
\left\lbrace  
\begin{aligned}  
&A_i \cap \bar{B}(0,R)  \subset \bigcup_{k=1}^{\xi_N} \bigcup_{j=1}^{\Gamma_k} \bar{B}(x_{kj},\e),  \\  
& \text{for each}\; k, \text{the  balls} \; \bar{B}(x_{kj},\e), \;  1\leq j\leq \Gamma_k,\,
\text{are pairwise disjoint}.  
\end{aligned}  
\right.  
$$  
The family $(x_{kj})_j$ is a priori only countable, but has to be finite by boundedness of $A_i$ and because  
the radius of covering balls is fixed. We now want to estimate $\sum_{k=1}^{\xi_N} \Gamma_k$. Let us therefore compute  
$$  
\int_{K \cap \bar{B}(0,R+\e)} \sum_{k=1}^{\xi_N} \sum_{j=1}^{\Gamma_k} \1_{\bar{B}(x_{kj},\e)}.  
$$  
On the one hand, we have  
\begin{eqnarray} \label{abcd111} 
\sum_{k=1}^{\xi_N} \int_{K \cap \bar{B}(0,R+\e)} \sum_{j=1}^{\Gamma_k}\1_{\bar{B}(x_{kj},\e)}  
\leq \xi_N \Lc^N(K \cap \bar{B}(0,R+\e)),  
\end{eqnarray} 
because for each $k$, the balls $\bar{B}(x_{kj},\e)$ are pairwise disjoint. On the other hand, for each $k$ and $j$,  
the ball $\bar{B}(x_{kj},\e)$ contains a fixed portion of the cone ${\mathcal C}_{\nu_i ,x_{kj}}^{\rho/2,\theta}$, portion which is  
included in $K \cap \bar{B}(0,R+\e)$ by the interior cone property, since 
$x_{kj} \in A_i \cap \bar{B}(0,R)$. We call 
\begin{eqnarray*} 
\g:= \Lc^N(\bar{B}(x_{kj},\e)\cap {\mathcal C}_{\nu_i ,x_{kj}}^{\rho/2,\theta}) 
\end{eqnarray*} 
the volume of this portion of cone, the computation of which is done in 
Step 7. Note that $\gamma$ is independent of $x_{kj}.$ Therefore  
\begin{eqnarray} \label{abcd222} 
\int_{K \cap \bar{B}(0,R+\e)} \sum_{k=1}^{\xi_N} \sum_{j=1}^{\Gamma_k}\1_{\bar{B}(x_{kj},\e)}= \sum_{k=1}^{\xi_N}  
\sum_{j=1}^{\Gamma_k} \int_{K \cap \bar{B}(0,R+\e)} \1_{\bar{B}(x_{kj},\e)} 
 \geq \sum_{k=1}^{\xi_N} \Gamma_k \g. 
\end{eqnarray} 
From \eqref{abcd111} and  \eqref{abcd222}, we deduce 
$$  
\sum_{k=1}^{\xi_N} \Gamma_k \leq \frac{\xi_N}{\g} \Lc^N(K \cap \bar{B}(0,R+\e)).  
$$  
 Since $B((x,y),\e) \subset  
V=B_{N-1}(x,\rho / 4) \times \left( y-{\theta}/{2}, y+{\theta}/{2} \right),$ 
as soon as $\e < {\rm min}\{\rho / 4, {\theta}/{2}\}=\rho/4$, we deduce from this 
that $A_i \cap \bar{B}(0,R)$ can be covered by $\sum_{k=1}^{\xi_N} \Gamma_k$ 
cylinders of the form of $V$ centered at points of $A_i \cap \bar{B}(0,R)$, so that, 
from \eqref{abcd111}, 
$$  
\begin{aligned}  
\Hc^{N-1}(A_i \cap \bar{B}(0,R)) &\leq \sum_{k=1}^{\xi_N} \Gamma_k \, \omega_{N-1} 
\left(\frac{\rho}{4}\right)^{N-1} \, \frac{2\theta}{\rho} \\  
& \leq \frac{\xi_N}{\g}  \, \omega_{N-1} \left(\frac{\rho}{4}\right)^{N-1}  \,  
\frac{2\theta}{\rho} \, \Lc^{N}(K \cap B(0,R+\e)).  
\end{aligned}  
$$  
  
\noindent  \textit{6. Sum for all axes.} What we have done does not depend on the fixed 
direction axis $\nu_i$,  
and we know, thanks to Step 1 that $\partial K$ is the union of less than $p=\frac{\b(N)}{(\rho / 2 \theta)^{N-1}}$ sets of the form $A_i$, so that we finally have  
$$  
\Hc^{N-1}(\partial K \cap B(0,R)) \leq \frac{\b(N)}{(\rho / 2 \theta)^{N-1}} \, \frac{\xi_N}{\g}  \, {\omega_{N-1} \left(\frac{\rho}{4}\right)^{N-1}}  \, \frac{2\theta}{\rho} \, \Lc^{N}(K \cap B(0,R+\e))  
$$  
which gives \eqref{est-perim}. \\ 
 
\noindent  \textit{7. Computation of the value of $\gamma.$} 
As soon as $\e \leq \sqrt{\theta^2-(\rho/2)^2}$ (the length of the longest 
segment included in $ \partial {\mathcal C}_{\nu_i,x_{kj}}^{\rho/2,\theta}$), then $\bar{B}(x_{kj},\e)$ 
contains at least the straight portion of ${\mathcal C}_{\nu_i,x_{kj}}^{\rho/2,\theta}$ 
of length $l=\rho \mu \e /(2\theta)$, the volume of which equals  
$$  
\frac{\omega_{N-1}}{N} \frac{l^N}{\mu^{N-1}}=\frac{\omega_{N-1}}{N} \mu \left(\frac{\rho}{2\theta} \e \right)^N.  
$$  
This gives a lower bound for $\g$. Moreover, we obtain a more precise estimate 
for $\Lambda$ in \eqref{est-perim}: 
since $\rho < \theta$, we see that $\rho /4 \leq \sqrt{\theta^2-(\rho/2)^2}$, so that sending $\e$ to $\rho /4$, we get  
$$  
\Hc^{N-1}(\partial K \cap \bar{B}(0,R)) \leq 4^{N+1} N \b(N) \xi_N
\, \frac{1}{\rho} \frac{(\theta /\rho)^{2N}}{\sqrt{(2\theta / \rho)^2-1}} \; \Lc^{N}(K \cap B(0,\bar{B}+\rho /4)).
$$  
\end{proof}  
 
\subsection{Propagation of the interior cone property}  \label{ppg-cone}
  
We want to prove that the interior cone property is preserved for sets
whose evolution is governed by the Eikonal equation
\eqref{chjb}. We assume:\\

\noindent{\bf (H7)} The function $c(\cdot,t)$
is Lipschitz continuous with a constant independant of $t\in [0,T]$ and, for all $R>0,$ 
there exists an increasing modulus of continuity $\omega_R$ 
such that, for all $x\in B(0,R),$ $t,s\in [0,T],$ then
$$
|c(x,t)-c(x,s)|\leq \omega_R(|t-s|).
$$

\begin{theorem}\label{propagation-cone}  
Assume that $c$ satisfies {\bf (H1)} and {\bf (H7)} and that that $u_0$ 
satisfies {\bf (H6)}. Let $u$ be the unique uniformly continuous viscosity solution of \eqref{chjb}.
Then there exist $\rho >0$ and $\theta >0$ depending only on $K_0$, $N$, $\bar c$, $\underline c$ and $C$,  such 
that $K(t)=\{ x \in \R^N; \; u(x,t) \geq 0 \}$ has the interior cone property 
of parameters $\rho$ and $\theta$ for all $t \in [0,T]$. 
More precisely, let 
$r>0$ be such that $K_0$ has the interior ball property of radius $r>0$, then 
we can choose  
$$  
\theta=\min \left\{ 
\frac{\underline{c}^2}{6C\overline{c}},\underline{c}\, \omega_R^{-1} 
\left( \frac{\underline{c}}{4}
\right),r \right\} 
\quad \text{and} \quad 
\rho=\frac{\underline{c}}{2\overline{c}} \theta  
$$  
where $R>0$ is such that $K_0+\overline{c}T \bar{B}(0,1)\subset \bar{B}(0,R).$
\end{theorem}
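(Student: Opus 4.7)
The plan is to use the optimal-control characterization of the viscosity solution of \eqref{chjb}: since $\uc > 0$, classical theory identifies
\[
K(t) = \bigl\{\gamma(t) : \gamma \in \mathrm{Lip}([0,t];\R^N),\ \gamma(0) \in K_0,\ |\dot\gamma(s)| \leq c(\gamma(s),s)\ \text{a.e.}\bigr\},
\]
i.e.\ $K(t)$ is the reachable set of $K_0$ in time $t$ under the speed bound $c$. The strategy is to construct, for each $z \in \partial K(t)$, an explicit family of admissible trajectories whose endpoints fill the interior cone at $z$ of the prescribed parameters.

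\textbf{Step 1 (extremal trajectory and cone axis).} For $z \in \partial K(t)$ I extract an admissible $\gamma\colon [0,t] \to \R^N$ with $\gamma(t) = z$, $\gamma(0) \in \partial K_0$, and $|\dot\gamma(s)| = c(\gamma(s),s)$ a.e. Existence of such an extremal trajectory follows from a compactness argument on the set of admissible trajectories; optimality (speed saturation and boundary starting point) follows because otherwise a small perturbation of either the initial condition or the control would reach a neighborhood of $z$, contradicting $z \in \partial K(t)$. I take the cone axis $\nu := -\dot\gamma(t)/c(z,t) \in \Sc^{N-1}$ (choosing a representative in a Lebesgue point of $\dot\gamma$). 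Combining the Lipschitz-in-$x$ bound $|c(x,\cdot)-c(x',\cdot)| \leq C|x-x'|$, the time modulus $\omega_R$, and $|\dot\gamma| \leq \oc$, a Gronwall-type bookkeeping on the adjoint/costate variable of the extremal trajectory yields
\[
\bigl| \gamma(t-\tau) - \bigl(z + \tau\,c(z,t)\,\nu\bigr) \bigr| \leq \bigl(C\oc\,\tau + \omega_R(\tau)\bigr)\,\tau, \qquad \tau \in [0,\theta/\uc].
\]

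\textbf{Step 2 (trajectories filling the cone).} Fix $\lambda \in [0,\theta]$, $\xi \in \bar{B}(0,1)$, and set $p := z + \lambda\nu + \lambda(\rho/\theta)\xi$, $\tau := \lambda/c(z,t) \in [0,\theta/\uc]$. Define the concatenation
\[
\tilde\gamma(s) := \begin{cases} \gamma(s), & s \in [0, t-\tau], \\[1mm] \gamma(t-\tau) + \dfrac{s-(t-\tau)}{\tau}\bigl(p-\gamma(t-\tau)\bigr), & s \in [t-\tau, t], \end{cases}
\]
so that $\tilde\gamma(0)=\gamma(0)\in K_0$ and $\tilde\gamma(t)=p$. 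The first piece inherits admissibility from $\gamma$. On the second (straight-line) piece, the constant velocity $\dot{\tilde\gamma}=(p-\gamma(t-\tau))/\tau$ must be compared with $c(\tilde\gamma(s),s)$. Using Step~1 and the choice $\tau c(z,t)=\lambda$,
\[
p - \gamma(t-\tau) = \lambda(\rho/\theta)\xi + O\!\bigl(\tau(C\oc\,\tau+\omega_R(\tau))\bigr),
\]
so
\[
|\dot{\tilde\gamma}| \leq \frac{c(z,t)\,\rho}{\theta} + C\oc\,\tau + \omega_R(\tau).
\]

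\textbf{Step 3 (admissibility and the explicit constants).} Since $c(z,t) \leq \oc$, $\tau \leq \theta/\uc$, and $c(\tilde\gamma(s),s) \geq \uc$, a sufficient admissibility condition is
\[
\frac{\oc\,\rho}{\theta} + \frac{C\oc\,\theta}{\uc} + \omega_R\!\left(\frac{\theta}{\uc}\right) \leq \uc.
\]
The choice $\rho/\theta = \uc/(2\oc)$ bounds the first term by $\uc/2$; the choice $\theta \leq \uc^2/(6C\oc)$ bounds the second term by $\uc/6$; and $\theta \leq \uc\,\omega_R^{-1}(\uc/4)$ bounds the third term by $\uc/4$. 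The sum is at most $11\uc/12 < \uc$, closing the estimate. The third constraint $\theta \leq r$ serves two purposes: it guarantees that the constructed cone of length $\theta$ fits geometrically inside $K(t)$ (using $K_0 \subset K(t)$ and the interior ball of radius $r$), and it handles the degenerate small-time regime $t < \theta/\uc$ where the concatenation of Step~2 cannot be formed; in that regime one inherits the cone directly from the interior ball of $K_0 \subset K(t)$.

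\textbf{Main obstacle.} The technical heart is the Gronwall-type estimate of Step 1 quantifying how slowly the velocity direction of the extremal trajectory can rotate: this requires extracting and controlling a Lipschitz costate from only the Lipschitz-in-$x$ and time-modulus assumptions on $c$. Once that estimate is in hand, the admissibility calculation is arithmetic and the explicit constants $\uc^2/(6C\oc)$, $\uc\,\omega_R^{-1}(\uc/4)$, $r$, and $\rho/\theta = \uc/(2\oc)$ are essentially forced by the three-term split in Step~3. A secondary subtlety is the small-time regime $t \leq \theta/\uc$, which is dispatched by the geometric argument via the interior ball of $K_0$.
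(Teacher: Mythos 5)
Your overall strategy is the same as the paper's: characterize $K(t)$ as a reachable set, take an extremal trajectory through $z\in\partial K(t)$, prove that its velocity direction rotates slowly, and build a cone of aperture $\uc/(2\oc)$ along the backward direction; your cone-filling via concatenated admissible trajectories is a legitimate variant of the paper's verification through the $1/\uc$-Lipschitz minimal time function. However, there are two gaps. The decisive one is your treatment of the small-time regime $t<\theta/\uc$: you claim that there ``one inherits the cone directly from the interior ball of $K_0\subset K(t)$'', but this is false as stated. Since $c\geq\uc>0$, one has $K_0+\uc t\bar{B}(0,1)\subset K(t)$, so every $z\in\partial K(t)$ lies \emph{outside} $K_0$, at distance between $\uc t$ and $\oc t$ from it; the interior ball of $K_0$ sits at the foot point $\gamma(0)$, not at $z$, and $K(t)$ is not convex, so you cannot pass from that ball to a cone with vertex $z$ by any direct inclusion. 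This is precisely why the paper devotes a separate, substantial argument (its Step 3) to small times: it transports the interior ball $\bar{B}(x(0)-r\,p(0)/|p(0)|,r)$ of $K_0$ forward along the flow $y'=c(y(t),t)\,p(t)/|p(t)|$ driven by the adjoint of the extremal trajectory, and controls the distortion of the cone by Gronwall estimates on $f(t)=|y(t)-x(t)+\lambda p(t)/|p(t)||^2$, valid exactly for $\bar t\leq b/C$; that is where the constraint $\theta\leq r$, the ratio $\rho_2/\theta_2=1/2$, and the ${\mathcal C}^2$-boundary hypothesis in {\bf (H6)} actually do their work. Without an argument of this type your constant $r$ in the statement is unjustified.

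The second gap is that your Step 1 estimate, which you rightly call the technical heart, is only asserted. It is the analogue of the paper's inequality \eqref{trajectory-C11}, obtained from the Pontryagin maximum principle and the adjoint system \eqref{pontryagine}; this requires $c(\cdot,t)$ to be ${\mathcal C}^1$ in space, which is not assumed, so the paper first mollifies $c$ in $x$ and passes to the limit, using that $\rho,\theta$ do not depend on the mollification parameter --- you would need to say this (or invoke a nonsmooth maximum principle). Moreover your constant is too optimistic: the rotation of the direction $p/|p|$ contributes an extra $2C\oc$, so the correct bound is $|x'(s)-x'(\bar t)|\leq 3C\oc(\bar t-s)+\omega_R(\bar t-s)$, i.e.\ after integration $\tfrac{3C\oc}{2}\tau^2+\omega_R(\tau)\tau$ rather than $C\oc\tau^2+\omega_R(\tau)\tau$. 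With this correction your budget in Step 3 becomes $\uc/2+\uc/4+\uc/4=\uc$, so the admissibility check still (barely) closes, but the slack you advertise is not there, and the computation must be redone with the true constant.
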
  

\begin{proof}[Proof of Theorem \ref{propagation-cone}.]  \ \\
\noindent{\it 1. Minimal time function.}
 We first remark that the assumption that $c(x,t)~\geq~\underline{c}$ implies 
that $t \mapsto u(x,t)$ is nondecreasing for any $x \in \R^N$. 
Moreover, this assumption and the finite speed of propagation 
property imply that if $u(x,t)=0$, then $u(x,s)>0$ for any $s\in (t,T]$.  
  Therefore, the minimal time function   
$$  
v(x) = \min \{ t \in [0,T]; \; u(x,t) \geq 0 \}  
$$  
is defined at points $x \in K(T)$, and for any $t \in [0,T]$,  
$$  
\begin{aligned}  
&\{ x \in \R^N; \; u(x,t) \geq 0 \} = \{ x \in \R^N; \; v(x) \leq t \},\\  
&\{ x \in \R^N; \; u(x,t) = 0 \} = \{ x \in \R^N; \; v(x) = t \}.  
\end{aligned}  
$$  
Moreover, $v$ is $1/ \underline{c}$-Lipschitz in $K(T)$: 
let us fix $x$ and $y$ in $K(T)$ with $v(x) \leq v(y)$. The function  
$$  
\overline{u}: (z,t) \mapsto 
\underset{|z'-z|\leq \underline{c} |t-v(x)|}{\sup}  u(z',v(x))   
$$  
is the unique uniformly continuous viscosity solution (see \cite{barles94}) of  
\begin{equation*}  
\left\lbrace  
\begin{array}{cl}  
\overline{u}_t(z,t)=\underline{c} |D\overline{u}(z,t)| 
& \text{in} \; \R^N \times (v(x),T),\\  
\overline{u}(\cdot,v(x))=u(\cdot,v(x)) 
& \text{in} \; \R^N. 
\end{array}  
\right.  
\end{equation*}  
The comparison principle for continuous viscosity solutions implies that $  
\overline{u} \leq u$ in $\R^N \times [v(x),T]$.  
In particular  
$$  
\overline{u}(y,\frac{1}{\underline{c}}|x-y| + v(x)) 
\leq u(y,\frac{1}{\underline{c}}|x-y| + v(x)),  
$$  
which implies by definition of $\overline{u}$ and $v$ that   
$$  
0 = u(x,v(x)) \leq \overline{u}(y,\frac{1}{\underline{c}}|x-y| + v(x)) \leq u(y,\frac{1}{\underline{c}}|x-y| + v(x)),  
$$  
from which the Lipschitz property follows, since we deduce that  
$$  
v(y) \leq \frac{1}{\underline{c}}|x-y| + v(x).  
$$  

\noindent{\it 2. Interior cone property at time $\bar{t}\in [\mu, T]$
  for some $\mu>0.$}
To prove the claim of the theorem, we will use arguments from control
theory. For this we need the velocity $c$ to be ${\mathcal C}^1$ in space, additionnal condition that we can assume without loss of generality 
by replacing $c$ by suitable space convolution $c_\delta$ of $c$. Then we get the result for $c_\delta$, and, letting $\delta\to0^+$, obtain the desired result since the constants $\theta$
and $\rho$ do not depend on $\delta$. \\

It is well-known that, for each time $t$, the set $K(t)$ can be seen as 
the reachable set from $K_0$ for the controlled system  
\begin{eqnarray} \label{sys-control}
x'(t)=c(x(t),t) {a}(t) \; \text{for} \; t \in [0,T],  
\end{eqnarray}
where the control ${a}$ takes its values in the unit closed ball. 
Let $x$ be an extremal trajectory, \textit{i.e.} a trajectory 
verifying $x(T) \in \partial K(T)$. For such a trajectory, it 
is easy to see that $t \mapsto u(x(t),t)$ is non-decreasing, 
from which we infer that $x(t) \in \partial K(t)$ for any 
$t \in [0,T]$, that is to say, $v(x(t))=t$. \\  
  
The Pontryagine maximum principle \cite{clarke76} implies the 
existence of an adjoint $p$ such that the following 
system is satisfied on $[0,T]$:  
\begin{equation}\label{pontryagine}  
\left\lbrace  
\begin{aligned}  
x'(t)&=c(x(t),t)\frac{p(t)}{|p(t)|},\\  
-p'(t)&=Dc(x(t),t)|p(t)|.  
\end{aligned}  
\right.  
\end{equation}
From now on, we fix $0\leq \bar{t}\leq T.$  
From \eqref{pontryagine} and the regularity of $c$ we infer that, 
if we set $M=3C\overline{c}$, then for any $s \in [0,\bar{t}]$,  
$$  
|x'(s) - x'(\bar{t})| \leq M (\bar{t}-s) + \omega_R (\bar{t}-s),  
$$  
where $R:=R_0+\oc T$ is given by Lemma \ref{fini-speed}.
By integration on $[t,\bar{t}],$ we deduce that, for any $t \in [0,\bar{t}]$,  
\begin{equation} \label{trajectory-C11}  
|x(\bar{t})-x(t)-x'(\bar{t})(\bar{t}-t)| \leq \frac M2 (\bar{t}-t)^2 
+ \omega_R(\bar{t}-t)(\bar{t}-t).  
\end{equation}  
Let $x \in \partial K(\bar{t})$, and 
let $x(\cdot)$ be an extremal trajectory with $x(\bar{t})=x$. 
We are going to show that for any $t \in [0,\bar{t}]$, the 
ball $\bar{B}(t)$ of radius $r(t)$ centered at $x(\bar{t})-x'(\bar{t})(\bar{t}-t)$ 
is contained in $K(\bar{t})$ for some $r(t)$ to determine, \textit{i.e.} 
that we have for any $\xi \in \bar{B}(0,r(t))$,  
$$  
v\left(x(\bar{t})-x'(\bar{t})(\bar{t}-t) + \xi \right) \leq \bar{t}.  
$$  
We therefore estimate, using the Lipschitz continuity of $v$ and \eqref{trajectory-C11},  
\begin{eqnarray*}  
&& \bar{t}-v\left( x(\bar{t})-x'(\bar{t})(\bar{t}-t) + \xi\right) \\  
&\geq & \bar{t}-v\left(x(\bar{t})
-x'(\bar{t})(\bar{t}-t)\right)- \frac{1}{\underline{c}} |\xi|\\  
&\geq & \bar{t}-v(x(t))- \frac{1}{\underline{c}} 
\left( \frac M2 (\bar{t}-t)^2 + \omega_R(\bar{t}-t)(\bar{t}-t) \right) 
- \frac{1}{\underline{c}} r(t) \\  
&= & \bar{t}-t- \frac{1}{\underline{c}} \left(\frac M2 (\bar{t}-t)^2 
+ \omega_R(\bar{t}-t)(\bar{t}-t) + r(t) \right).  
\end{eqnarray*}  
Thus if we set $r(t)=\frac{\underline{c}}{2}(\bar{t}-t)$,
the above quantity is nonnegative
as soon as  
$$
\bar{t}-t \leq \frac{\underline{c}}{2M} \quad \text{and} \quad \omega_R(\bar{t}-t) \leq
\frac{\underline{c}}{4}.
$$ 
For this choice, it follows
\begin{eqnarray*} 
\bar{B}(t)&= &\bar{B}\left(x(\bar{t})-x'(\bar{t})(\bar{t}-t),r(t)\right) \\
& = & \left\{ x(\bar{t})-\frac{x'(\bar{t})}{|x'(\bar{t})|}  |x'(\bar{t})|(\bar{t}-t) 
+\frac{\underline{c}}{2|x'(\bar{t})|} |x'(\bar{t})|(\bar{t}-t) \xi, \ \xi\in\bar{B}(0,1)
\right\} \\
&\subset&  K(\bar{t}).  
\end{eqnarray*}  
Since $x(\bar{t})=x$ and $\underline{c} \leq |x'(\bar{t})| \leq \overline{c}$, 
this proves the interior cone property at $x$ as soon as 
$\bar{t} \geq \mu = \min \left( \frac{\underline{c}}{2M}, \omega_R^{-1}\left(
\frac{\underline{c}}{4} \right) \right)$, 
of parameters   
$$  
\rho_1=\frac{\underline{c}}{2\overline{c}}\, \theta_1, \quad \text{with} 
\quad \theta_1=\min \left( \frac{\underline{c}^2}{2M},\underline{c}\omega_R^{-1} 
\left( \underline{c}/4 \right) \right).  
$$  
  
\noindent{\it 3. Interior cone property for small time $\bar{t}\in [0,\mu].$}
With the previous notation, let $x \in \partial K(\bar{t})$ and 
$x(\cdot)$ be an extremal trajectory of \eqref{sys-control} with $x(\bar{t})=x$. 
Let us recall that the regularity of $K_0$ implies that it 
has the interior ball property, \textit{i.e.} there exists $r>0$ 
independent of $y \in \partial K_0$ such that   
$$  
\bar{B}(y-\nu(y)r,r)\subset K_0,  
$$  
where $\nu(y)$ is the unit outer normal to $K_0$ at $y \in \partial K_0$. 
Note that, as a consequence, $K_0$ has the 
interior cone property  at $x(0)$ of parameters $\rho=r/2$ and $\theta=r$
and $\nu(x(0))=p(0)/|p(0)|$.
We see by the regularity of $K_0$ that $\nu(x(0))=p(0)/|p(0)|$, so that  
\begin{equation}\label{interior-ball}  
\bar{B}(x(0)-\frac{p(0)}{|p(0)|}r,r) \subset K_0.  
\end{equation}  
We will prove that, for $\bar{t} \leq \mu$, $K(\bar{t})$ has the interior cone 
property of parameters $\rho=r/2$ and $\theta=r$. 
Let $y \in {\mathcal C}_{\nu,x}^{r/2,r}$ 
with $\nu=-\frac{p(\bar{t})}{|p(\bar{t})|}$. We write $y$ as  
\begin{equation}\label{interior-cone1}  
y=x-\frac{p(\bar{t})}{|p(\bar{t})|}\lambda    +\frac 12 \lambda \xi,  
\end{equation}  
where $0\leq \lambda \leq r$ and $|\xi|\leq 1$. Let $y(\cdot)$ be the solution of  
\begin{equation*}  
\left\lbrace  
\begin{aligned}  
y'(t)&=c(y(t),t)\frac{p(t)}{|p(t)|} \quad \text{for} \; t \in [0,\bar{t}],\\  
y(\bar{t})&=y,  
\end{aligned}  
\right.  
\end{equation*}
where $p(\cdot)$ is the adjoint associated with $x(\cdot)$ by
\eqref{pontryagine}.
It is enough to prove that $y(0) \in K_0$, since then $y=y(\bar{t}) \in
K(\bar{t})$. 
Because of \eqref{interior-ball}, we only have to show that  
$$  
\left| y(0)-\left(x(0)-\frac{p(0)}{|p(0)|}\lambda \right) \right| \leq \lambda.  
$$  
Moreover, we remark that \eqref{interior-cone1} implies that  
$$  
\left| y(\bar{t})-\left(x(\bar{t})
-\frac{p(\bar{t})}{|p(\bar{t})|}\lambda\right)\right| 
= |\frac 12 \lambda \xi | \leq \frac{\lambda}{2}.  
$$  
Let us therefore set   
$$  
f(t)=|y(t)-x(t)+ \lambda \frac{p(t)}{|p(t)|}|^2,  
$$  
so that $f(\bar{t}) \leq \frac{\lambda^2}{4}$. 
It only remains to prove that $f(0) \leq \lambda^2$. But   
\begin{eqnarray*}   
f'(t)&=&
2\left\langle y(t)-x(t), y'(t)-x'(t) \right\rangle 
+ 2 \lambda \left\langle y'(t)-x'(t), \frac{p(t)}{|p(t)|} \right\rangle \\
&& \quad\quad\quad\quad\quad 
+  2 \lambda \left\langle y(t)-x(t), 
\frac{d}{dt} \frac{p(t)}{|p(t)|} \right\rangle \\  
&=& 2\left\langle y(t)-x(t), \left(c(y(t),t)-c(x(t),t)\right)
\frac{p(t)}{|p(t)|} \right\rangle \\
&& \quad\quad\quad\quad\quad 
+  2 \lambda \left\langle \left(c(y(t),t)-c(x(t),t)\right)\frac{p(t)}{|p(t)|},
\frac{p(t)}{|p(t)|} \right\rangle \\ 
& & \quad\quad\quad \quad\quad 
+ 2 \lambda \left\langle y(t)-x(t), \frac{p'(t)}{|p(t)|}
-\frac{p(t)\left\langle p(t),p'(t) \right\rangle }{|p(t)|^3} \right\rangle\\  
&\geq&  -2C|y(t)-x(t)|^2 -2\lambda C |y(t)-x(t)| -2\lambda |y(t)-x(t)| 
\left|\frac{p'(t)}{|p(t)|}\right| \\ 
& &  \quad\quad\quad\quad\quad 
-2\lambda |y(t)-x(t)|\left| 
\frac{p(t)\left\langle p(t),p'(t) \right\rangle }{|p(t)|^3}\right|.  
\end{eqnarray*}  
Thanks to \eqref{pontryagine}, we know that  
$$  
\left|\frac{p'(t)}{|p(t)|}\right|\leq C 
\quad \text{and} \quad 
\left|\frac{p(t)\langle p(t),p'(t) \rangle }{|p(t)|^3}\right|\leq C,  
$$  
so that  
$$  
f'(t) \geq -2C|y(t)-x(t)|^2 -6 \lambda C |y(t)-x(t)|.  
$$  
But if we set $g(t)=|y(t)-x(t)|^2$, then   
$$  
g'(t)=2\langle y(t)-x(t), y'(t)-x'(t) \rangle \geq -2C|y(t)-x(t)|^2=-2Cg(t),  
$$  
which implies that for all $t \in [0,\bar{t}]$  
$$  
g(t)e^{2Ct} \leq g(\bar{t})e^{2C\bar{t}},  
$$  
that is to say thanks to \eqref{interior-cone1}  
$$  
|y(t)-x(t)| \leq |y-x|e^{C(\bar{t}-t)} \leq \frac{3\lambda}{2}e^{C\bar{t}}.  
$$  
We therefore obtain  
$$  
f'(t) \geq -2C \left( \frac{3\lambda}{2}e^{C\bar{t}} \right)^2 -6 \lambda C
\, 
\frac{3\lambda}{2}e^{C\bar{t}}
=-\left(\frac 92 C e^{2C\bar{t}} + 9Ce^{C\bar{t}} \right) \lambda^2.
$$   
If we set $k=\frac 92 C e^{2C\bar{t}} + 9Ce^{C\bar{t}}$, we finally have  
$$  
f(0)\leq f(\bar{t}) +k \lambda^2 \bar{t} \leq \frac{\lambda^2}{4} 
+k \lambda^2 \bar{t}\leq \lambda^2  
$$  
as soon as $k\bar{t} \leq \frac 34$. Thus if we set ${b}$ to be the unique
solution of $\frac 92 {b} e^{2{b}} + 9{b}e^{{b}}= \frac 34$ (${b}>0$), 
we get that $f(0)\leq 0$ as soon as $\bar{t} \leq {b}/C$. If we assume that   
$$  
\frac {b}C \geq \frac{\underline{c}}{2M}=\frac{\underline{c}}{6C\overline{c}},  
$$  
which is always possible by reducing $\underline{c}$ 
or increasing $\overline{c}$, we see
that $K(\bar{t})$ 
has the interior cone property of parameters $\rho_2=r/2$ and
$\theta_2=r$ 
for all $0\leq \bar{t}\leq \mu$ 
(note that the parameters $\rho_2, \theta_2$ depend
only on $c$ and $K_0$).\\

\noindent{\it 4. End of the proof.} 
We remark that   
$$  
\frac{\rho_1}{\theta_1} = \frac{\underline{c}}{2\overline{c}} 
\leq \frac 12 = \frac{\rho_2}{\theta_2},  
$$  
whence we finally obtain that for any $\bar{t} \geq 0$, $K(\bar{t})$ 
has the interior cone property of parameters 
$\rho=\frac{\underline{c}}{2\overline{c}} \theta$ 
with $\theta=\min \{\theta_1,\theta_2\}$.  
\end{proof}


\begin{thebibliography}{10} 
%
\bibitem{acm04} 
O.~Alvarez, P.~Cardaliaguet, and R.~Monneau. 
\newblock Existence and uniqueness for dislocation dynamics with nonnegative 
  velocity. 
\newblock {\em Interfaces Free Bound.}, 7:415--434, 2005. 
 
\bibitem{ahlm06} 
O. Alvarez, P. Hoch, Y. Le~Bouar, and R. Monneau. 
\newblock Dislocation dynamics: short-time existence and uniqueness of the 
  solution. 
\newblock {\em Arch. Ration. Mech. Anal.}, 181(3):449--504, 2006. 

\bibitem{barles94} G.~Barles. 
\newblock Solutions de viscosit\'e des \'equations de {H}amilton-{J}acobi.
\newblock Springer-Verlag, Paris, 1994.
 

\bibitem{bclm07}  G.~Barles, P.~Cardaliaguet, O.~Ley and R.~Monneau.
\newblock Global existence results and uniqueness for dislocation equations.
\newblock To appear in {\em  SIAM J. Math. Anal.}

\bibitem{bclmt08}  G.~Barles, P.~Cardaliaguet, O.~Ley and A.~Monteillet.
\newblock Existence of weak solutions for general nonlocal equations.
\newblock Preprint.

 
\bibitem{bl06} 
G.~Barles and O.~Ley. 
\newblock Nonlocal first-order {H}amilton-{J}acobi equations modelling 
  dislocations dynamics. 
\newblock {\em Comm. Partial Differential Equations}, 31(8):1191--1208, 2006. 
 
\bibitem{bss93} 
G.~Barles, H.~M. Soner, and P.~E. Souganidis. 
\newblock Front propagation and phase field theory. 
\newblock {\em SIAM J. Control Optim.}, 31(2):439--469, 1993. 
 
\bibitem{bourgoing04a} 
M.~Bourgoing. 
\newblock Vicosity solutions of fully nonlinear second order parabolic 
  equations with $L1$-time dependence and neumann boundary conditions. 
\newblock To appear in {\em Discrete and Continuous Dynamical Systems}.
 
\bibitem{bourgoing04b} 
M.~Bourgoing. 
\newblock Vicosity solutions of fully nonlinear second order parabolic 
  equations with $L1$-time dependence and neumann boundary conditions. existence 
  and applications to the level-set approach. 
\newblock To appear in {\em Discrete and Continuous Dynamical Systems}. 
 
 
%
 
 
%
%
 
 
%
 
 
%

\bibitem{clarke76} F.~H. Clarke.
\newblock  The maximum principle under minimal hypotheses,
\newblock {\em SIAM J. Control Optimization}, 14(6):1078--1091, 1976.

 
\bibitem{cl83} 
M.~G. Crandall and P.-L. Lions. 
\newblock Viscosity solutions of {H}amilton-{J}acobi equations. 
\newblock {\em Trans. Amer. Math. Soc.}, 277(1):1--42, 1983. 

 
\bibitem{eg92}   
Evans, L.C.; Gariepy, R.F.,   
\newblock {\sl Measure theory and fine properties of functions},   
\newblock Studies in Advanced Mathematics,   
\newblock CRC Press, Boca Raton, FL, 1992.  

 
%
%

\bibitem{ggi92}
Y.~Giga, S.~Goto, and H.~Ishii.
\newblock Global existence of weak solutions for interface equations coupled
  with diffusion equations.
\newblock {\em SIAM J. Math. Anal.}, 23(4):821--835, 1992.



%

 
\bibitem{ishii85} 
H.~Ishii. 
\newblock {H}amilton-{J}acobi equations with discontinuous {H}amiltonians on arbitrary open sets. 
\newblock {\em Bull. Fac. Sci. Eng. Chuo Univ.}, 28:33--77, 1985. 

%
 
\bibitem{ley01} 
O.~Ley. 
\newblock Lower-bound gradient estimates for first-order {H}amilton-{J}acobi equations and applications to the regularity of propagating fronts. 
\newblock {\em Adv. Differential Equations}, 6(5):547--576, 2001. 
 
\bibitem{lp87} 
P.-L. Lions and B.~Perthame. 
\newblock Remarks on {H}amilton-{J}acobi equations with measurable 
  time-dependent {H}amiltonians. 
\newblock {\em Nonlinear Anal.}, 11(5):613--621, 1987. 


\bibitem{monteillet07} A.~Monteillet. 
\newblock  Integral formulations of the geometric eikonal equation. 
\newblock {\em Interfaces Free Bound.},  9(2):253--283, 2007.


%
\bibitem{nunziante90} 
D.~Nunziante. 
\newblock Uniqueness of viscosity solutions of fully nonlinear second order 
  parabolic equations with discontinuous time-dependence. 
\newblock {\em Differential Integral Equations}, 3(1):77--91, 1990. 
 
\bibitem{nunziante92} 
D.~Nunziante. 
\newblock Existence and uniqueness of unbounded viscosity solutions of parabolic equations with discontinuous time-dependence. 
\newblock {\em Nonlinear Anal.}, 18(11):1033--1062, 1992. 
 
\bibitem{rlf03} 
D.~Rodney, Y.~Le~Bouar, and A.~Finel. 
\newblock Phase field methods and dislocations. 
\newblock {\em Acta Materialia}, 51:17--30, 2003. 

 
%

\bibitem{ss96}
P.~Soravia and P.~E. Souganidis.
\newblock Phase-field theory for {F}itz{H}ugh-{N}agumo-type systems.
\newblock {\em SIAM J. Math. Anal.}, 27(5):1341--1359, 1996.

 
\end{thebibliography}
\end{document}